\newtheorem{theorem}{\sc Theorem}[section]
\newtheorem{thm}[theorem]{\sc Theorem}
\newtheorem{lemma}[theorem]{\sc Lemma}
\newtheorem{proposition}[theorem]{\sc Proposition}
\newtheorem{corollary}[theorem]{\sc Corollary}
\newtheorem{ex}[theorem]{\sc Example}
 \newtheorem*{thmA}{Theorem A}
 \newtheorem*{thmB}{Theorem B}
 \newtheorem*{thmC}{Theorem C}
 \newtheorem*{thmD}{Theorem D}
\title{On wreath product occurring as subgroup of automata group}
\author{Alex C. Dantas}
\address{Departamento de Matem\'atica, Universidade de Bras\'ilia,
Brasilia-DF, 70910-900 Brazil}
\email{(Dantas) alexcdan@gmail.com}
\author{Junio R. Oliveira}
\address{Departamento de Matem\'atica, Universidade de Bras\'ilia,
Brasilia-DF, 70910-900 Brazil}
\email{(Oliveira) junrocoli@gmail.com}
\author{Tulio Gentil}
\address{Departamento de Matem\'atica, Centro de Tecnologia, Cidade Universit\'aria da Universidade Federal do Rio de Janeiro, Rio de Janeiro-RJ, 21941-909 Brazil}
\email{(Gentil) tuliogentil@im.ufrj.br}
\subjclass[2020]{20E08, 20B27, 20K20.}
\keywords{}
\begin{document}
\maketitle

\begin{abstract}
A finitely generated group is said to be an automata group if it admits a faithful self-similar finite-state representation on some regular $m$-tree. We prove that if $G$ is a subgroup of an automata group, then for each finitely generated abelian group $A$, the wreath product $A \wr G$ is a subgroup of an automata group. We obtain, for example, that $C_2 \wr (C_{2} \wr \mathbb{Z})$, $\mathbb{Z} \wr (C_2 \wr \mathbb{Z})$, $C_2 \wr (\mathbb{Z} \wr \mathbb{Z})$, and $\mathbb{Z} \wr (\mathbb{Z} \wr \mathbb{Z})$ are subgroups of automata groups. In the particular case $\mathbb{Z} \wr (\mathbb{Z} \wr \mathbb{Z})$, we prove that it is a subgroup of a two-letters automata group; this solves Problem 15.19 - (b) of the Kourovka Notebook proposed by A. M. Brunner and S. Sidki in 2000 \cite{BruSid3, NK}.  
\end{abstract}

\section{Introduction}
An element $\alpha = (\alpha_1, \dots, \alpha_m)\sigma(\alpha)$ of the automorphism group $\mathcal{A}_{m} \simeq \mathcal{A}_{m} \wr S_{m}$ of the one-rooted $m$-regular tree $\mathcal{T}_{m}$ is \textit{finite-state} if the set of states $Q(\alpha) = \{\alpha\} \cup \cup_{i = 1}^{m} Q(\alpha_{i})$ of $\alpha$ is finite.  The set of all finite-state automorfisms forms a countable subgroup $\mathcal{F}_m$ of $\mathcal{A}_m$, see \cite{BruSid1}. A subgroup of $\mathcal{F}_{m}$ is called finite-state. A lot of interest in finite-state groups has been raised in the past decades, and important groups have finite-state representations, e.g., the linear general group GL$(n, \mathbb{Z})$ (see \cite{BruSid2}), wreath product of groups (see \cite{S}), the free abelian group of countable rank (see \cite{BruSid1}), the free metabelian group $\mathbb{M}_{r}$ of finite rank $r$, and the lamplighter groups $\mathbb{Z}^{r} \wr \mathbb{Z}^{s}$ (see \cite{BruSid3}). Brunner and Sidki in \cite{BruSid3} asked if the group $\mathbb{Z} \wr (\mathbb{Z} \wr \mathbb{Z})$ has a finite-state representation - the main objective of these notes is to answer this question affirmatively.

Another interesting class of groups is the family of \textit{self-similar} groups. A subgroup $G$ of $\mathcal{A}_{m}$ is self-similar of degree $m$ if, for any $\alpha$ in $G$, the set $Q(\alpha)$ is a subset of $G$. A self-similar group can be directly given as a subgroup of $\mathcal{A}_{m}$ - as is the case of the infinite torsion Grigorchuk group (see \cite{G}), the infinite torsion Gupta-Sidki group (see \cite{GuS}), and the Hanoi towers group (see \cite{BSZ, RS}) - or
constructed to act on $\mathcal{T}_{m}$ by way of finitely many virtual endomorphisms - as is the case of the free abelian
group of countable rank (see \cite{BarSid, DSS}), BFC-groups (see \cite{D}), finitely generated nilpotent groups (see \cite{BDS, BerSid}), metabelian groups (see \cite{DS, KS}), affine linear groups (see \cite{BruSid3}), and arithmetic groups (see \cite{K}). A finitely generated subgroup $G$ of $\mathcal{F}_{m}$ is said to be an \textit{automata group} if $G$ is self-similar; in other words, $G$ is an automata group if for any $\alpha$ in $G$, the set $Q(\alpha)$ is a finite subset of $G$. The most well-known self-similar groups given directly as a subgroup of $\mathcal{A}_{m}$ are automata groups. It is worth noting that any finitely genereted subgroup $G$ of $\mathcal{F}_{m}$ is a subgroup of an automata group. Indeed, if $G = \langle x_{1}, \dots, x_{n} \rangle$, then $Q = \langle x_{1}, \dots, x_{n}, Q(x_{1}),\dots, Q(x_{n}) \rangle$ is an automata group, and $G \leqslant Q$.

We use the approach of virtual endomorphisms to establish that the group $A \wr G$ is a subgroup of a self-similar group, where $A$ is a finitely generated abelian group and $G$ is a subgroup of a self-similar group. A virtual endomorphism of an abstract group $G$ is a homomorphism $f: H \rightarrow G$ from a subgroup $H$ of finite index in $G$. Consider a group $G$ and 
$${\bf H} = (H_{i} \leqslant G \mid [G : H_{i}] = m_{i}, \, (1 \leq i \leq s)),$$
$${\bf m} = (m_1, \dots , m_s), \, m_1 + \dots + m_{s} = m,$$
$${\bf F} = (f_i: H_i \rightarrow G \, \text{virtual endomorphism} \mid (1 \leq i \leq s)).$$
Then the $G$-data $({\bf m}, {\bf H}, {\bf F})$ induces a self-similar representation $\varphi: G \rightarrow \mathcal{A}_{m}$ with kernel 
$${\bf F}-\text{core}({\bf H}) = \langle K\leqslant \cap _{i=1}^{s}H_{i}\mid K\vartriangleleft
		G,K^{f_{i}}\leqslant K,\forall i=1,\dots,s\rangle.$$
If the ${\bf F}$ - core$({\bf H})$ is trivial, the $G$-data is said to be \textit{simple} and the group $G$ is self-similar; see \cite{DSS} for more information. For $1 \leq i \leq s$, define the subgroup $H_{\omega_{i}}$ of $H_i$ by $H_{\omega_{i}} = \langle K \leqslant H_{i}\mid K^{f_{i}}\leqslant K \rangle,$
and define the \textit{parabolic subgroup} $H_{\omega}$ of $\cap _{i=1}^{s}H_{i}$ by
$$H_{\omega} = \langle K\leqslant \cap _{i=1}^{s}H_{i}\mid ,K^{f_{i}}\leqslant K, \forall i=1,\dots,s\rangle.$$
We have the following result about parabolic subgroups.

\begin{thmA}
A simple $G$-data $({\bf m}, {\bf H}, {\bf F})$ induces a simple $G$-data $({\bf l}, {\bf K}, {\bf E})$ with a trivial parabolic subgroup. Moreover, if the representation of $G$ induced by the $G$-data $({\bf m}, {\bf H}, {\bf F})$ is finite-state, then the representation induced by the $G$-data $({\bf l}, {\bf K}, {\bf E})$ is also finite-state.
\end{thmA}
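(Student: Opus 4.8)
The plan is to produce $({\bf l},{\bf K},{\bf E})$ by \emph{symmetrising} $({\bf m},{\bf H},{\bf F})$ over the first level of the tree. Write $\varphi\colon G\to\mathcal A_m$ for the representation induced by $({\bf m},{\bf H},{\bf F})$. Since the data is simple, $\varphi$ is faithful, so $G$ is residually finite and $\bigcap_{n\ge 1}\mathrm{St}_\varphi(n)=1$. Put $N:=\mathrm{St}_\varphi(1)$, a normal subgroup of finite index; one checks $N=\bigcap_{i=1}^{s}\bigcap_{x\in G}H_i^{\,x}$, so in particular $N\le\bigcap_i H_i$. I would fix a finite transversal $T$ of $N$ in $G$ with $1\in T$. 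Because $N\trianglelefteq G$ and $N\le H_i$, the conjugate $H_i^{\,x}$ depends only on the coset $Nx$, and hence for each $i$ the family $\{H_i x\mid x\in T\}$ already exhausts $H_i\backslash G$.

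The construction I would then use: let $({\bf l},{\bf K},{\bf E})$ have one type for each pair $(i,x)$ with $1\le i\le s$ and $x\in T$, with $K_{(i,x)}:=H_i^{\,x}$ (of index $m_i$) and $e_{(i,x)}\colon H_i^{\,x}\to G$ the virtual endomorphism $e_{(i,x)}(g)=f_i\!\left(xgx^{-1}\right)$ (conjugation by $x$, an isomorphism $H_i^{\,x}\to H_i$, followed by $f_i$); thus $l=|T|\,m$, and for each fixed $x$ the block of branches $\{(i,x)\}_i$ is a conjugate of the original data. I would also pick the coset representatives for $K_{(i,x)}\backslash G$ compatibly with those for $H_i\backslash G$, namely so that the representative of $H_i^{\,x}z$ maps, under left multiplication by $x$, onto the chosen representative of $H_i\,xz$.

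Two points then have to be checked. First, $K_\omega=1$: since $\bigcap_{(i,x)}K_{(i,x)}=\bigcap_i\bigcap_{x\in G}H_i^{\,x}=N$, any $L\le K_\omega$ satisfies $L\le N$ and $e_{(i,x)}(L)\le L$ for all $(i,x)$. For $g\in L$ the element $e_{(i,x)}(g)=f_i(xgx^{-1})$ is exactly the state of $\varphi(g)$ at the level-$1$ vertex of type $i$ and coset $H_i x$ (which $g$ fixes, as $g\in N$); as $(i,x)$ ranges over all types these vertices exhaust level $1$, so every state of every $g\in L$ at a level-$1$ vertex again lies in $L$. An induction on $n$ then gives $L\le\mathrm{St}_\varphi(n)$ for all $n$ (use that $g$ fixes level $1$ and all its level-$1$ states fix level $n$), hence $L\le\bigcap_n\mathrm{St}_\varphi(n)=1$, so $K_\omega=1$. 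Since the ${\bf E}$-core of ${\bf K}$ is defined by the same invariance conditions as $K_\omega$ together with normality in $G$, it is contained in $K_\omega$, hence trivial; so $({\bf l},{\bf K},{\bf E})$ is simple.

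Second, the finite-state property is preserved. With the compatible choice of representatives, the state of an element $h\in G$, for the new data, at the vertex of type $(i,x)$ and coset $H_i^{\,x}z$ coincides with the state of $h$, under $\varphi$, at the vertex of type $i$ and coset $H_i(xz)$; and as $(i,x,z)$ varies, $xz$ covers $H_i\backslash G$. Hence the families of state maps of the two data agree as sets of self-maps of $G$, so $Q_{({\bf l},{\bf K},{\bf E})}(g)=Q(g)$ for every $g$; in particular finiteness of all $Q(g)$ forces finiteness of all $Q_{({\bf l},{\bf K},{\bf E})}(g)$. The conceptual crux is the first point — realising that symmetrising over a transversal of $\mathrm{St}_\varphi(1)$ exposes the simultaneous parabolic subgroup as $\bigcap_n\mathrm{St}_\varphi(n)$; the step where I expect to have to be careful is the matching of coset representatives in the last paragraph, since it is only this bookkeeping that stops the conjugations built into the $e_{(i,x)}$ from enlarging the state sets and destroying the finite-state property.
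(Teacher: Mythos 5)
Your construction is correct, and at heart it is the same symmetrisation the paper performs: both proofs pre-compose the $f_i$ with conjugation by a transversal of the normal core of $\bigcap_i H_i$ (your $N=\mathrm{St}_\varphi(1)$ is exactly the paper's $H_G$), and both obtain finite-state preservation by choosing the new coset representatives compatibly so that the Schreier states of the new data coincide, as elements of $G$, with those of the old data. The differences are in packaging. You keep the conjugates $H_i^{x}$ as domains, indexed by pairs $(i,x)$ with $x$ in a transversal of $N$, whereas the paper first replaces every $H_i$ by $H_G$ (its Lemma \ref{2.2}) and only then conjugates; your route avoids that lemma and gives a smaller degree ($[G:N]\cdot m$ rather than $s[G:N]^2$), a mild gain. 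For triviality of the new parabolic subgroup the paper argues algebraically: writing $g=ht_{ij}$ it shows the normal closure $K_\omega^{G}$ is $f_i$-invariant, hence lies in the trivial $\mathbf{F}$-core; you argue dynamically, showing by induction on levels that $K_\omega\le\bigcap_n\mathrm{St}_\varphi(n)=1$, which uses faithfulness of $\varphi$ but no normal closures. Both work; the paper's version is transversal-free at this step, while yours needs one bookkeeping fix: the assertion that $e_{(i,x)}(g)=f_i(xgx^{-1})$ \emph{is} the state of $\varphi(g)$ at the vertex of coset $H_i x$ holds only when the chosen representative of $H_i x$ in $T_i$ is $x$ itself; with an arbitrary representative $t=nx$, $n\in N$, the state is the conjugate of $e_{(i,x)}(g)$ by $f_i(n)$, which need not lie in $L$. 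Since $N\le H_i$, you may simply choose each $T_i$ inside your transversal $T$ (the kernel of $\varphi$, hence $\bigcap_n\mathrm{St}_\varphi(n)=1$, does not depend on this choice), after which your induction, and the rest of the argument, goes through as written.
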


If the $G$-data $({\bf m}, {\bf H}, {\bf F})$ is simple with ${\bf m} = (m_{1}, \dots, m_{s})$, then we say that $G$ is self-similar of \textit{orbit-type} $(m_{1},\dots, m_{s})$. In \cite{BarSid}, Bartoldi and Sidki showed that if $B$ is a finite abelian group and the $G$-data $((m), (H \leqslant G), (f: H \rightarrow G))$ is simple with parabolic subgroup $H_{\omega}$, then $\mathcal{G} = B^{(H_{\omega} \setminus G)} \rtimes G$ is self-similar with degree $|B|\cdot m$, where $(H_{\omega} \setminus G)$ is the set of left cosets $\{ H_{\omega}g \mid g \in G \}$. The following Theorem extends the Bartoldi-Sidki's result and fixes the Theorem E of \cite{DSS}.

\begin{thmB}
Let $A$ be a finitely generated abelian group and $G$ be a non-torsion self-similar group induced by the $G$-data $(\mathbf{m},\mathbf{H}, \mathbf{F})$. Let $H$ be the group $\cap_{i=1}^{s}H_i$. Then,
$$\mathcal{G}=A^{((H_{\omega_1} \setminus G) \times \cdots \times (H_{\omega_s} \setminus G))}\rtimes G^s$$ is a self-similar group.
If, in addition, $G$ is finite-state, then $\mathcal{G}$ is also finite-state.
\end{thmB}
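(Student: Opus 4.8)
The plan is to produce an explicit simple $\mathcal{G}$-data $(\mathbf{l},\mathbf{K},\mathbf{E})$ and then invoke the general machinery recalled above, by which a simple data induces a faithful self-similar representation and a finite-state data induces a finite-state representation. Write $M=A^{(\Omega)}$ with $\Omega=\Omega_1\times\cdots\times\Omega_s$ and $\Omega_i=H_{\omega_i}\setminus G$, so that $\mathcal{G}=M\rtimes G^s$ is, concretely, the wreath product of $A$ by $G^s$ relative to the action on $\Omega$ in which the $i$-th copy of $G$ acts on the $i$-th factor by right translation and trivially on the others. Then $G^s$ is transitive on $\Omega$ with point stabiliser $H_{\omega_1}\times\cdots\times H_{\omega_s}$, and since $G$ is non-torsion the set $\Omega$ is infinite.

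First I would collect the ingredients. From the $G$-data I recall the faithful self-similar action $\varphi\colon G\to\mathcal{A}_m$: after fixing right transversals of the $H_i$ in $G$, an element $g$ permutes the $m_i$ letters of the $i$-th block according to the coset action of $G$ on $H_i\setminus G$ and has sections there given through $f_i$ by the usual formula, while each $H_{\omega_i}$ is precisely the stabiliser in $G$ of the distinguished ray through the $i$-th block. I would also fix once and for all an element $\gamma\in G$ of infinite order (available because $G$ is non-torsion) and a finite-state self-similar representation $\psi$ of $A$, which exists because finitely generated abelian groups are self-similar and finite-state (cf.\ \cite{BruSid3, BerSid}).

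The heart of the proof is the construction of $(\mathbf{l},\mathbf{K},\mathbf{E})$, extending the construction of Bartholdi--Sidki. Each $K_j\leqslant\mathcal{G}$ is taken to be the preimage, under the projection $\mathcal{G}\to G^s$, of the relevant index-$m_i$ subgroup of $G^s$ built from $H_i$, so that $\mathbf{K}$ restricts on the top layer $G^s$ to the self-similar structure on $G^s$ induced by that of $G$ (which is simple, and finite-state when $G$ is). The task is to define the virtual endomorphism $e_j\colon K_j\to\mathcal{G}$ so that it restricts to $f_i$ on the top layer while on the lamp layer $M$ it acts by the support-rearrangement dictated by the coset action of $G$ on $H_i\setminus G$, combined --- on the free part of $A$ --- with the adding-machine dynamics carried by $\psi$ and transported along the fixed element $\gamma$; the role of $\gamma$ is to give the possibly infinite-order lamps genuine self-similar behaviour compatible with the ambient action, the subgroup $\langle\gamma\rangle\cong\mathbb{Z}$ playing the part that a finite alphabet of lamp-values plays when $A$ is finite. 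I expect this to be the main obstacle: the indices, the $K_j$ and, above all, the $e_j$ must be pinned down so as to be simultaneously well defined, independent of the transversals, compatible with the semidirect-product structure and, crucially, of trivial core. This is exactly where the statement of Theorem E of \cite{DSS} had to be corrected, and where the non-torsion hypothesis --- as well as the appearance of $G^s$ and of the product $\Omega_1\times\cdots\times\Omega_s$ --- is indispensable.

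It then remains to verify simplicity, i.e.\ that $\mathbf{E}\text{-core}(\mathbf{K})=1$. Let $L\vartriangleleft\mathcal{G}$ lie in $\bigcap_j K_j$ and be invariant under every $e_j$. Then $L\cap M$ is a $G^s$-submodule of the permutation module $A^{(\Omega)}$ that is stable under the lamp-layer part of each $e_j$; but $G^s$ acts transitively on the infinite set $\Omega$, so a nonzero finitely supported element is never $G^s$-invariant, and the shift built into the $e_j$ drives the support of any nonzero element off to infinity, so no nonzero $G^s$-submodule can be $e_j$-invariant. Hence $L\cap M=1$, and therefore $L$ embeds, via $\mathcal{G}\to G^s$, onto a normal subgroup of $G^s$ contained in the core of the $G^s$-data and invariant under its virtual endomorphisms; that data being simple, $L=1$. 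Thus $(\mathbf{l},\mathbf{K},\mathbf{E})$ is simple and $\mathcal{G}$ is self-similar. Finally, if $\varphi$ is finite-state, a general element of $\mathcal{G}$ is a product of an element of $G^s$ --- whose states are finite in number because $G$, hence $G^s$, is finite-state --- with a finitely supported lamp $v\in M$, and the states of $v$ are controlled coordinate-by-coordinate by the finite-state representation $\psi$ together with the finitely many states of $\gamma$, while under the $e_j$ the support of $v$ is only permuted and merged; so $v$, and hence every element of $\mathcal{G}$, has finitely many states, and $\mathcal{G}$ is finite-state. This completes the proof.
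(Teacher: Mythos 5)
There is a genuine gap: the proposal is a plan rather than a proof, and the one step you yourself flag as ``the main obstacle'' --- pinning down the subgroups $K_j$ and, above all, the virtual endomorphisms $e_j$ --- is exactly the content of the theorem and is never carried out. The paper does supply these objects explicitly (Theorem 4.1): for the lamp group $\mathbb{Z}$ it takes $\mathcal{H}=\mathbb{Z}^{(\Omega)}\rtimes H^s$ with maps $\rho_1,\dots,\rho_s$ acting by the $f_i$ on cosets and killing lamps supported outside $H^s$, \emph{plus} one extra virtual endomorphism $\mu:\mathcal{G}\to\mathcal{G}$ of index $1$ sending the diagonal lamp generator $b^{(H_{\omega_1},\dots,H_{\omega_s})}$ to $(x,\dots,x)\in G^s$ with $o(x)=\infty$ and killing $G^s$ (hence orbit-type $(m^s,\dots,m^s,1)$); for a finite lamp group $B$ it instead shrinks the domain to $\mathcal{H}=\langle[B^{(\Omega)},G^s]\rangle\rtimes H^s$ of index $|B|\cdot m^s$. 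Your $\gamma$ and the auxiliary self-similar representation $\psi$ of $A$ gesture in this direction but are never turned into concrete maps, so nothing can be verified about well-definedness or cores; and general finitely generated $A$ is handled in the paper by splitting $A$ into free and finite parts and gluing by concatenation (Proposition 3.2), another step your sketch omits.

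Worse, the simplicity argument you sketch would fail for the naive choice of $e_j$ that your description suggests. If the $e_j$ merely restrict to the $f_i$ on $G^s$ and rearrange/kill lamp coordinates according to the coset action, then the full lamp module $M=A^{(\Omega)}$ is contained in every $K_j$, is normal in $\mathcal{G}$, and is invariant under every $e_j$: the support is \emph{not} ``driven off to infinity'' (the maps induced by the $f_i$ contract cosets toward the parabolic classes $H_{\omega_i}$), and transitivity of $G^s$ on the infinite set $\Omega$ does not exclude $M$ itself as a nonzero invariant submodule. Eliminating this obstruction is precisely what the extra devices above achieve: $\mu$ forces any core element of the lamp layer to have total exponent sum zero (this is where non-torsion of $G$ enters), respectively the commutator domain excludes single-lamp elements in the finite case, and only then does the paper's minimal-support argument, based on the injectivity of the coset map $\lambda:(H_{\omega_i}h_i)_i\mapsto(H_{\omega_i}h_i^{f_i})_i$ (your Proposition 3.3 analogue), yield triviality of the core. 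Similarly, the finite-state claim is checked in the paper by an explicit computation of states with respect to explicit transversals, not by the general remark you make. So the strategy is the right one, but the decisive constructions and verifications are missing, and one of the asserted intermediate claims is false as stated.
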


As a consequence of Theorem B, we obtain that the groups $C_2 \wr (C_{2} \wr \mathbb{Z})$ and $\mathbb{Z} \wr (C_2 \wr \mathbb{Z})$ are finite-state, see Corollary \ref{C1}.

A subgroup $G$ of $\mathcal{A}_m$ is said to be \textit{transitive} if its action on the first level of the tree $\mathcal{T}_m$ is transitive. It was recently shown that the wreath product $\mathbb{Z} \wr \mathbb{Z}$ fails to have a faithful
transitive self-similar representation of degree $m$ for any $m$, see \cite{DS}. However, Dantas, Santos, and Sidki showed in \cite{DSS} that the group $\mathbb{Z} \wr \mathbb{Z}$ has a non-transitive self-similar finite-state representation of degree $3$. Define the $k$-iterated wreath product of $k$ copies of $\mathbb{Z}$ inductively by $W_{1}(\mathbb{Z}) = \mathbb{Z}$, $W_{k}(\mathbb{Z}) = \mathbb{Z} \wr W_{k-1}(\mathbb{Z})$ for $k > 1$. Using Theorem A, we have the following result.

\begin{thmC}
    Let $A$ be a finitely generated abelian group and $G$ be a non-torsion self-similar finite-state group, then $A \wr G$ is finite-state.
\end{thmC}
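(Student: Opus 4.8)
The plan is to realise $A\wr G$ as a subgroup of the self-similar group produced by Theorem~B, after first improving the self-similar structure on $G$ by means of Theorem~A.

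Since $G$ is self-similar and finite-state, its faithful finite-state self-similar action on a tree $\mathcal T_m$ is induced by a simple $G$-data $(\mathbf m,\mathbf H,\mathbf F)$: take $s$ to be the number of $G$-orbits on level $1$, $H_i$ the stabiliser of a chosen vertex $v_i$ in the $i$-th orbit, $m_i=[G:H_i]$, and $f_i\colon H_i\to G$ the section $\alpha\mapsto\alpha|_{v_i}$; simplicity of the $G$-data is exactly faithfulness of the action. By Theorem~A we may pass to a simple $G$-data, still inducing a finite-state representation, whose parabolic subgroup $H_\omega$ is trivial, and we keep the notation $(\mathbf m,\mathbf H,\mathbf F)$ for it. As $G$ is non-torsion, Theorem~B now applies and gives that
$$\mathcal G=A^{(X)}\rtimes G^s,\qquad X=(H_{\omega_1}\setminus G)\times\cdots\times(H_{\omega_s}\setminus G),$$
with $G^s$ acting on $X$ coordinatewise by right translation, is self-similar, and finite-state because $G$ is; so $\mathcal G\leqslant\mathcal F_M$ for some $M$.

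It remains to embed $A\wr G$ into $\mathcal G$. Observe first that $H_{\omega_i}=\operatorname{Stab}_G(\overline{v_i})$, where $\overline{v_i}=v_iv_i\cdots$ is the periodic boundary ray at $v_i$ (one inclusion iterates $K^{f_i}\leqslant K$, the other uses self-similarity of $G$). Suppose we have chosen $c_1,\dots,c_s\in G$ with $\bigcap_{i=1}^s c_i^{-1}H_{\omega_i}c_i=1$, and let $\Delta\colon G\hookrightarrow G^s$ be the diagonal embedding. The $\Delta(G)$-orbit $Y$ of the point $(H_{\omega_1}c_1,\dots,H_{\omega_s}c_s)\in X$ is $\{(H_{\omega_1}c_1g,\dots,H_{\omega_s}c_sg):g\in G\}$, and $g\mapsto(H_{\omega_i}c_ig)_i$ is a $\Delta(G)$-equivariant bijection from $G$ onto $Y$ precisely because that intersection is trivial. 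Hence $A^{(Y)}\cong A^{(G)}$ as $G$-modules through $\Delta$, and, since $Y$ is $\Delta(G)$-invariant,
$$A\wr G=A^{(G)}\rtimes G\ \cong\ A^{(Y)}\rtimes\Delta(G)\ \leqslant\ A^{(X)}\rtimes G^s=\mathcal G .$$
As $\mathcal G\leqslant\mathcal F_M$, the group $A\wr G$ is finite-state.

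The point that needs real work is the existence of $c_1,\dots,c_s\in G$ with $\bigcap_i c_i^{-1}H_{\omega_i}c_i=1$, equivalently boundary rays $\eta_i=c_i^{-1}\overline{v_i}$ in the $G$-orbit of $\overline{v_i}$ whose stabilisers intersect trivially. This is where triviality of the parabolic subgroup is used: $H_\omega$ is exactly the pointwise stabiliser in $G$ of the set $W=\{v_{i_1}v_{i_2}\cdots : i_j\in\{1,\dots,s\}\}$ of rays built from the letters $v_1,\dots,v_s$, so $H_\omega=1$ says no non-trivial element of $G$ fixes $W$ pointwise. Upgrading this to a trivial joint stabiliser for finitely many rays, each constrained to lie in the $G$-orbit of one $\overline{v_i}$, is the technical heart of the proof; I expect it to exploit the finite-state hypothesis — reducing to eventually periodic rays and hence to the finitely many sections of a fixed element — together with the non-torsion assumption, while the remainder of the argument is just bookkeeping with wreath products and Theorems~A and~B.
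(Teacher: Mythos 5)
Your reduction through Theorem~B leaves a genuine gap exactly where you place the ``technical heart'': the existence of $c_1,\dots,c_s\in G$ with $\bigcap_{i=1}^{s}c_i^{-1}H_{\omega_i}c_i=1$. Theorem~A only makes the \emph{joint} parabolic subgroup $H_\omega$ trivial, i.e.\ it kills subgroups invariant under all the $f_i$ simultaneously; the individual parabolics $H_{\omega_i}$ (invariance under iterates of a single $f_i$) can remain nontrivial, and the containment $H_\omega\leqslant\bigcap_i H_{\omega_i}$ is in general proper. You offer no argument for producing the conjugators $c_i$, only the expectation that finite-state plus non-torsion should yield them, so the claimed embedding $A\wr G\hookrightarrow A^{(X)}\rtimes G^s$ is unproven and the proposal does not yet establish Theorem~C. (The rest of your bookkeeping --- the diagonal embedding over a $\Delta(G)$-orbit $Y\subseteq X$ with trivial point stabilizer, and $A^{(Y)}\rtimes\Delta(G)\leqslant\mathcal G$ --- is fine \emph{granted} that lemma.)

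The paper avoids this obstacle by not using Theorem~B's group at all for this purpose. In Section~5 it forms $\mathcal G=A^{((H_\omega\setminus G)\times\cdots\times(H_\omega\setminus G))}\rtimes G^s$ with the \emph{joint} parabolic $H_\omega$ in every coordinate, and gives it the $\mathcal G$-data $((m^s,1,1),(\mathcal H,\mathcal G,\mathcal G),(\rho,\tau,\mu))$, where $\rho$ applies $f_1,\dots,f_s$ coordinatewise, $\tau$ cyclically permutes the coordinates, and $\mu$ uses the non-torsion hypothesis. The resulting state-closed representation $\varphi$ need not be faithful on $\mathcal G$; instead Theorem~5.1 shows, via Proposition~3.3 (injectivity modulo $H_\omega$ of the mixed compositions $\lambda_{n,\delta}$, which the pair $\rho,\tau$ realizes), that the kernel meets the diagonal subgroup $D=\langle a^{(H_\omega,\dots,H_\omega)},(g,\dots,g)\mid g\in G\rangle\cong A^{(H_\omega\setminus G)}\rtimes G$ trivially, and that $\mathcal G^{\varphi}$ is finite-state. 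Applying Theorem~A first so that $H_\omega=1$ makes $D\cong A\wr G$, which is Corollary~5.2. So where your route needs an additional, unproved (and delicate) statement about conjugated stabilizers of boundary rays, the paper trades faithfulness of the ambient representation for a kernel computation carried out only on the diagonal copy of $A\wr G$; if you want to salvage your approach you must either prove the existence of the $c_i$ or switch to this kernel-avoidance argument.
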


As a consequence of Theorem C, we obtain that the groups $C_{2} \wr (\mathbb{Z} \wr \mathbb{Z})$ and $W_{k}(\mathbb{Z})$ ( $k \geq 1$) are finite-state. For $k = 3$, Theorem C answers affirmatively the question of Brunner and Sidki in \cite{BruSid3}. The representation of $W_{3}(\mathbb{Z}) = \mathbb{Z} \wr (\mathbb{Z} \wr \mathbb{Z})$ in Theorem C is of degree $10$, meaning that the group $W_{3}(\mathbb{Z})$ in this representation is a subgroup of $\mathcal{F}_{10}$. We use a deflation on the degree of the tree, and we obtain the following.

\begin{thmD}
The group $W_{3}(\mathbb{Z}) = \mathbb{Z} \wr (\mathbb{Z} \wr \mathbb{Z})$ is a subgroup of $\mathcal{F}_2$.
\end{thmD}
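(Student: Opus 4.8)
The plan is to start from the explicit finite--state self--similar representation $\varphi\colon W_{3}(\mathbb{Z})\hookrightarrow\mathcal{F}_{10}$ furnished by the proof of Theorem~C, and to \emph{deflate} it to a representation on the binary tree by recoding the ten letters through a suitable binary prefix code. First I would write down the three canonical generators $r,s,t$ of $\mathbb{Z}\wr(\mathbb{Z}\wr\mathbb{Z})$ (the base generator of the outermost $\mathbb{Z}$ and the two generators of the inner $\mathbb{Z}\wr\mathbb{Z}$) together with their finite sets of states, and read off the permutations $\pi_{v}\in S_{10}$ that label the vertices $v$ in these wreath recursions. The crucial structural point to isolate is that, since the degree--$10$ representation is assembled from the degree--$3$ representation of $\mathbb{Z}\wr\mathbb{Z}$ of \cite{DSS} (whose vertex permutations are trivial or a single transposition, the orbit--type being $(1,2)$) and from the adding--machine gadget for $\mathbb{Z}$ (whose vertex permutation is a single transposition), all the permutations $\pi_{v}$ occurring in the states of $r,s,t$ lie in one common $2$--subgroup $P\le S_{10}$, in fact a small elementary abelian one generated by a few disjoint transpositions. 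This is the feature that makes a descent to two letters possible at all: a finite--state group of degree $m\ge3$ need not embed in $\mathcal{F}_{2}$, since $\mathcal{A}_{2}$ has no element of order $3$, so torsion--freeness alone does not suffice and one must use the precise shape of the recursions.

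Next I would choose a full binary tree $\Gamma$ with $10$ leaves, i.e.\ a complete binary prefix code $c\colon\{0,\dots,9\}\to\{0,1\}^{\le4}$, whose automorphism group $\mathrm{Aut}(\Gamma)\le S_{10}$ contains $P$; concretely it suffices to make the few transposed pairs of letters into pairs of sibling leaves of $\Gamma$. The code $c$ induces a level--blocking embedding $\iota\colon\mathcal{T}_{10}\hookrightarrow\mathcal{T}_{2}$ which, by completeness of the code, is a homeomorphism on boundaries and carries each cylinder $C_{w}$ of $\mathcal{T}_{10}$ to the cylinder $C_{\iota(w)}$ of $\mathcal{T}_{2}$. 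Because every vertex permutation $\pi_{v}$ lies in $\mathrm{Aut}(\Gamma)$, it extends canonically to an automorphism of the depth--$\le4$ binary tree cut out by $\Gamma$; consequently each generator $g\in\{r,s,t\}$, transported through $\iota$, extends to a genuine automorphism $\widehat{g}$ of $\mathcal{T}_{2}$, the extension being obtained by interleaving, between consecutive ``old'' levels, the finitely many partial automorphisms of $\Gamma$. The set of states of $\widehat{g}$ is then the union of the $\iota$--transports of the states of $g$ with these finitely many partial automorphisms of $\Gamma$, hence finite, so $\widehat{g}\in\mathcal{F}_{2}$; and since $\iota$--conjugation is a homomorphism of homeomorphism groups realized on tree automorphisms by the $\widehat{g}$, the map $g\mapsto\widehat{g}$ is a homomorphism. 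Faithfulness is immediate from $\iota$ being a bijection of boundaries, so $W_{3}(\mathbb{Z})\hookrightarrow\mathcal{F}_{2}$, and closing up the (finite) set of all states of $\widehat{r},\widehat{s},\widehat{t}$, as in the introduction, exhibits $W_{3}(\mathbb{Z})$ as a subgroup of an automata group on the binary tree.

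The main obstacle is the first step: actually verifying that \emph{every} vertex permutation appearing in the Theorem~C representation of $W_{3}(\mathbb{Z})$ lies in one fixed $2$--subgroup of $S_{10}$ — equivalently, that the construction of Theorem~C never produces a $3$-- or $5$--cycle among the letters, a single such cycle at any vertex of any state being fatal since it cannot be realized inside $\mathrm{Aut}(\Gamma)$ for any binary $\Gamma$. This forces one to track the vertex permutations through the whole construction: through the degree--$3$ representation of $\mathbb{Z}\wr\mathbb{Z}$, through the passage from $G$ to $A\wr G$ of Theorem~B/Theorem~C, and through the $A$--gadget for $A=\mathbb{Z}$, checking at each stage that the bookkeeping of orbits keeps all local actions products of disjoint transpositions. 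Once this is in hand, the remaining work — choosing $\Gamma$ to absorb $P$, rewriting the recursions on $\mathcal{T}_{2}$, and confirming that only finitely many new states (the partial automorphisms of $\Gamma$) are created — is routine.
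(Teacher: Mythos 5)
Your proposal is correct and is essentially the paper's own route: the ``degree deflation'' in the paper is exactly your binary prefix-code recoding, grouping the ten letters as $10=8+2$ with $8=4+4=2+2+2+2$, so that the degree-$10$ representation coming from Theorem~C (via the orbit-type $(2,2,2)$ representation of $\mathbb{Z}\wr\mathbb{Z}$ and the data of orbit-type $(8,1,1)$) is rewritten on $\mathcal{T}_2$. The one step you defer --- checking that every vertex permutation preserves a common nested binary block structure --- is precisely what the paper settles by explicitly listing the degree-$10$ recursions of $a,y_1,y_2,y_3,x_1,x_2,x_3$, whose local permutations are products of disjoint transpositions such as $(1\,8)(2\,7)(3\,6)(4\,5)$, $(1\,2)(3\,4)(5\,6)(7\,8)$, $(1\,3)(2\,4)(5\,7)(6\,8)$ respecting those blocks, and then writing the resulting finite-state binary recursions.
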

By Theorem D, the group $W_{3}(\mathbb{Z}) = \mathbb{Z} \wr (\mathbb{Z} \wr \mathbb{Z})$ is a subgroup of a two-letters automata group, this answers affirmatively the Problem 15.19 - (b) of the Kourovka Notebook \cite{NK} proposed by Brunner and Sidki in \cite{BruSid3}.

\section{Preliminaries}

\subsection{The one-rooted $\mathbf{m}$-regular tree $\mathbf{\mathcal{T}_m}$ and its automorphisms}

For a vertex $v$ belonging to a rooted tree, its \textit{level} is defined as the length of the geodesic from the root ($\emptyset$) to $v$, denoted by $|v|$. A spherically homogeneous regular tree of valence $m$ ($m\geq 2$) will be denoted by $\mathcal{T}_m$. The vertices of  $\mathcal{T}_{m}$ will be indexed by strings
from the alphabet $Y=\{1,2,\dots,m\}$, ordered by $u<v$ provided the
string $v$ is a prefix of $u$. The tree $\mathcal{T}_{m}$ is also denoted as 
$\mathcal{T}\left( Y\right) $.


The automorphism group $\mathcal{A}_{m}$, or $\mathcal{A}\left( Y\right) $,
of $\mathcal{T}_{m}$ is isomorphic to the restricted wreath product
recursively defined as $\mathcal{A}_{m}=\mathcal{A}_{m}\wr S_{m}$, where $%
S_{m}$ is the symmetric group of degree $m$. An automorphism $\alpha $ of $%
\mathcal{T}_{m}$ has the form $\alpha =(\alpha _{1},\dots,\alpha _{m})\sigma
(\alpha )$, where the state $\alpha _{i}$ belongs to $\mathcal{A}_{m}$ and
where $\sigma :\mathcal{A}_{m}\rightarrow S_{m}$ is the permutational
representation of $\mathcal{A}_{m}$ on $Y$, the first level of the tree $%
\mathcal{T}_{m}$; the permutation $\sigma(\alpha)$ is called the \textit{activity of} $\alpha$. Successive developments of the automorphisms $\alpha _{i}$
produce $\alpha _{u}$ for all vertices $u$ of the tree. For $k\geq 1$, the
action of $\alpha $ on a string $y_{1}y_{2}\cdots y_{k}\in Y^{k}$ is as follows
\begin{equation*}
\alpha :y_{1}y_{2}\cdots y_{k}\mapsto \left( y_{1}\right) ^{\sigma (\alpha
)}\left( y_{2}\cdots y_{k}\right) ^{\alpha _{y_{1}}}\text{. }
\end{equation*}%
This implies that $\alpha $ induces an automorphism $\alpha _{\mathbf{k}}$
on the $m^{k}$-tree $\mathcal{T}\left( Y^{k}\right) $ and the above action
on $k$-strings gives us a group embedding $\mathcal{A}\left( Y\right)
\rightarrow \mathcal{A}\left( Y^{k}\right) $ which we call $k$-\textit{%
inflation}.

\subsection{Virtual endomorphisms and finite-state groups}

A \textit{virtual endomorphism} of an abstract group $G$ is a homomorphism $f:H \to G$ from a subgroup $H$ of finite index in $G$. Let $G$ be a group and consider 
\begin{equation*}
	\mathbf{H=} \left( H_{i} \leqslant G \mid \left[ G:H_{i}\right] =m_{i}\text{ }%
	\left( \text{ }1\leq i\leq s\right) \right) \text{,}
\end{equation*}%
\begin{equation*}
	\mathbf{m=}\left( m_{1},\dots,m_{s}\right) ,\text{ }m=m_{1}+\dots+m_{s},
\end{equation*}
\begin{equation*}
	\mathbf{F}=\left( f_{i}:H_{i}\rightarrow G \,\, \text{virtual endomorphisms}\mid 1\leq i\leq s\right) \text{;
	}
\end{equation*}
we will call  $\left( \mathbf{m},\mathbf{H,F}\right) $ a $G$-\textit{data} or \textit{data for} $G$. The $\mathbf{F}$-\textit{core} is the largest subgroup of $ \cap _{i=1}^{s}H_{i}$ which is normal in $G$ and $f_i$ - invariant for all $i=1,\dots,s$. In the case $\mathbf{F}$-core is trivial, we say that $\mathbf{F}$ is \textit{free}.

Let $(\bf{m}, \bf{H}, \bf{F})$ be a $G$-data and for each subgroup $H_i$ in $\mathbf{H}$, choose a right transversal $T_i=\{t_{i1},t_{i2},\cdots,t_{im_i}\}$ in $G$. Define the $i$-Schreier function $\theta_i: G \times T_i \to H_i$ by 
$$ \theta_i(g,t_{ij})=t_{ij}g(t_{ik})^{-1},\,\,\, \text{where}\,\,\, H_it_{ik}=H_it_{ij}g,$$
and let $\sigma$ be the induced permutation representation of $G$ on $T=T_1 \cup T_2 \cup \cdots \cup T_s$, which means that $j^\sigma=k$ if and only if $H_it_{ik}=H_it_{ij}g$ for some $i=1,\cdots,s$. The maps in $\mathbf{F}$ extends to a homomorphism

$$\varphi : G \to \mathcal{A}_{m}$$
defined by
$$g^{\varphi}=(\theta_i(g,t)^{f_i \varphi}\,\,|\,\,t\in T_i, 1\leq i \leq s)\,\,g^\sigma;$$
this is essentially the contents of the Kaloujnine-Krasner Theorem. 

An element $g\in G$ is called \textit{finite-state} if there exists a finite subset $U \subset G$ containing $g$ such that $(U^{T})^{\varphi} \subset U$, where $$U^{T} = \{x, \theta_i(x,t)^{f_i} \mid t \in T, x \in U\}.$$ 
A group $G$ is \textit{finite-state} if every element $g \in G$ is finite-state.


\subsection{Self-similarity of groups} For $\alpha = (\alpha_1, \dots, \alpha_m)\sigma(\alpha) \in \mathcal{A}\left( Y\right) $, the set of automorphisms $Q(\alpha)=\{\alpha_u \,\,| \,\,u \in Y^k,\,\, k\geq 0\} = \{\alpha\} \cup \cup_{i = 1}^{m} Q(\alpha_{i})$ is called the set of \textit{states} of $\alpha$. A subgroup 
$G$ of $\mathcal{A}_{m}$ is \textit{state-closed} (or \textit{self-similar}%
) if $Q(\alpha )$ is a subset of $G$ for all $\alpha $ in $G$. More
generally, a subgroup $G$ of $\mathcal{A}_{m}$ is said to be $k$th\textit{-level} 
\textit{state-closed} if, for all $\alpha \in G$, the states $\alpha _{u}$ belong to 
$G $ for all strings $u$ of length $k$; it follows that the $k$-inflation of 
$G $ is state-closed. A group which is finitely generated, state-closed and
finite-state is called an \textit{automata group}. \\

The following approach to produce 
self-similar groups was given in \cite{DSS}.


\begin{proposition} Given a group $G$, $m\geq 2$ and a $G$-data $\left( 
	\mathbf{m},\mathbf{H,F}\right) $. Then the data provides a self-similar
	representation of $G$ on the $m$-tree with kernel the $\mathbf{F}$-core, 
	\begin{equation*}
		\langle K\leqslant \cap _{i=1}^{s}H_{i}\mid K\vartriangleleft
		G,K^{f_{i}}\leqslant K,\forall i=1,\dots,s\rangle \text{.}
	\end{equation*}
\end{proposition}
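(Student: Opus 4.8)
The plan is to establish the three statements bundled together here: that the recursion
\[
g^{\varphi}=\bigl(\theta_i(g,t)^{f_i\varphi}\mid t\in T_i,\ 1\le i\le s\bigr)\,g^{\sigma}
\]
defines a homomorphism $\varphi\colon G\to\mathcal A_m$, that the image $G^{\varphi}$ is self-similar, and that $\ker\varphi$ equals the displayed subgroup, which is the largest normal subgroup $K$ of $G$ contained in $\bigcap_{i=1}^{s}H_i$ with $K^{f_i}\le K$ for every $i$ (such a largest subgroup exists because the class of such $K$ is closed under products). Since the right-hand side of the recursion refers to $\varphi$ again, I would first make it rigorous by truncation: for each $k\ge 0$ let $\varphi^{(k)}\colon G\to\mathrm{Aut}(\mathcal T_m^{(k)})$ be the map into the automorphism group of the depth-$k$ truncation $\mathcal T_m^{(k)}$ of $\mathcal T_m$ given by the same formula with $\varphi$ replaced by $\varphi^{(k-1)}$. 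Each $\varphi^{(k)}$ is then well defined by induction on $k$, the maps are compatible with the truncation morphisms $\mathrm{Aut}(\mathcal T_m^{(k)})\to\mathrm{Aut}(\mathcal T_m^{(k-1)})$, and $\varphi$ is their inverse limit.

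The homomorphism property is proved by induction on $k$ for the maps $\varphi^{(k)}$. The computation that makes it work is the Schreier cocycle identity $\theta_i(gh,t_{ij})=\theta_i(g,t_{ij})\,\theta_i\bigl(h,t_{i,j^{\sigma(g)}}\bigr)$, which is read off from the definitions of $\theta_i$ and $\sigma$. Applying the homomorphism $f_i$ and then $\varphi^{(k-1)}$ turns this into the corresponding identity inside $\mathrm{Aut}(\mathcal T_m^{(k-1)})$, and comparing with the wreath multiplication of $\mathrm{Aut}(\mathcal T_m^{(k)})=\mathrm{Aut}(\mathcal T_m^{(k-1)})\wr S_m$ gives $(gh)^{\varphi^{(k)}}=g^{\varphi^{(k)}}h^{\varphi^{(k)}}$; this is essentially the Kaloujnine--Krasner argument, carried out over the $s$ transversals at once. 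Self-similarity of $G^{\varphi}$ is then immediate from the defining formula, since the first-level states of $g^{\varphi}$ are exactly the elements $\theta_i(g,t)^{f_i\varphi}\in G^{\varphi}$; an induction on the length of a vertex shows that every state of $g^{\varphi}$ lies in $G^{\varphi}$.

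For the kernel I would prove the two inclusions separately. Suppose $K\le\bigcap_i H_i$ is normal in $G$ with $K^{f_i}\le K$ for all $i$. For $g\in K$ one shows by induction down the tree that $g^{\varphi}$ fixes every vertex: at a first-level vertex $t_{ij}$ the activity is trivial because $t_{ij}gt_{ij}^{-1}\in K\le H_i$ forces $H_it_{ij}g=H_it_{ij}$, so $\theta_i(g,t_{ij})=t_{ij}gt_{ij}^{-1}\in K$; hence $\theta_i(g,t_{ij})^{f_i}\in K^{f_i}\le K$, and the induction continues from the state $\theta_i(g,t_{ij})^{f_i\varphi}$. Thus the $\mathbf F$-core is contained in $\ker\varphi$. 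Conversely, put $N=\ker\varphi$; it is normal in $G$, and since $N$ fixes every first-level vertex, $tgt^{-1}\in H_i$ for all $t\in T_i$ and $g\in N$, so $N\subseteq\bigcap_i\mathrm{core}_G(H_i)\subseteq\bigcap_i H_i$, where $\mathrm{core}_G(H_i)=\bigcap_{x\in G}x^{-1}H_ix$. For $g\in N$ the same simplification gives $\theta_i(g,t)=tgt^{-1}$, and the vanishing of the corresponding state of $g^{\varphi}$ yields $(tgt^{-1})^{f_i}\in N$; since $N$ is normal, $tgt^{-1}$ ranges over $N$ as $g$ does, so $N^{f_i}\le N$. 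Hence $N$ is one of the admissible subgroups $K$, so $N\subseteq\mathbf F\text{-core}$, and the two inclusions give equality.

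The step I expect to demand the most care is the kernel computation, and within it the inclusion $\ker\varphi\subseteq\mathbf F\text{-core}$: one must identify the first-level kernel with $\bigcap_i\mathrm{core}_G(H_i)$, use this to simplify the Schreier functions on $\ker\varphi$, and then extract $f_i$-invariance from the vanishing of the states — all while handling the $s$ coset systems simultaneously. The homomorphism and self-similarity parts, by contrast, are routine once the truncation bookkeeping is set up.
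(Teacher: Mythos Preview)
Your proof is correct, but note that the paper does not actually prove this proposition: it is quoted from \cite{DSS} (Dantas--Santos--Sidki, \emph{Intransitive self-similar groups}), as indicated by the sentence preceding the statement. So there is no proof in the paper to compare against.

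That said, what you have written is precisely the standard argument underlying this construction, and all three parts are handled correctly. The truncation/inverse-limit formalism makes the recursive definition of $\varphi$ rigorous; the Schreier cocycle identity $\theta_i(gh,t_{ij})=\theta_i(g,t_{ij})\,\theta_i(h,t_{i,j^{\sigma(g)}})$ is exactly what drives the homomorphism check; self-similarity is immediate from the shape of the recursion. In the kernel computation, your observation that triviality at the first level forces $N\le\bigcap_i\mathrm{core}_G(H_i)$, together with the normality trick to pass from $(t g t^{-1})^{f_i}\in N$ to $N^{f_i}\le N$, is the clean way to get the reverse inclusion. One small remark: in the last step you do not even need to let $t$ vary---fixing a single $t_{ij}$ already suffices, since conjugation by it is a bijection of the normal subgroup $N$.
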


We say that a $G$-data $(\bf{m}, \bf{H}, \bf{F})$ is \emph{simple} if the $\mathbf{F}$-core is trivial. In this case, $G$ is self-similar.

\begin{lemma}\label{2.2}
   Let $G$ be a self-similar group with respect to the data $(\mathbf{m},\mathbf{H},\mathbf{F})$. If $K_i$ is a subgroup of $H_i$ with $[H_i:K_i]=n_i$, then the data $(\mathbf{n},\mathbf{K},\mathbf{\overline{F}})$, where $$\mathbf{n}=(m_1n_1,\dots,m_sn_s)$$ $$\mathbf{K}=(K_1,\dots,K_s)$$ $$\mathbf{\overline{F}}=(f_1|_{K_1}, \dots, f_s|_{K_s}),$$ defines $G$ as self-similar. Moreover, if $G$ is finite-state in the first representation, then it is also finite-state with respect to the second representation.
\end{lemma}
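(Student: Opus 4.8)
The plan is to prove two things: that the refined $G$-data $(\mathbf n,\mathbf K,\overline{\mathbf F})$ is simple — so, by the Proposition above, it represents $G$ self-similarly — and that the resulting representation is finite-state whenever the original one is. For simplicity, recall that the kernel of the representation induced by $(\mathbf n,\mathbf K,\overline{\mathbf F})$ is its $\overline{\mathbf F}$-core, so I must show that is trivial. Let $N\le\bigcap_{i=1}^s K_i$ be normal in $G$ with $N^{f_i|_{K_i}}\le N$ for all $i$. Since $K_i\le H_i$ we get $N\le\bigcap_i H_i$, and $N^{f_i}=N^{f_i|_{K_i}}\le N$, so $N$ is contained in the $\mathbf F$-core of $(\mathbf m,\mathbf H,\mathbf F)$; that core is trivial because $(\mathbf m,\mathbf H,\mathbf F)$ is simple, hence $N=1$. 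Thus the $\overline{\mathbf F}$-core is trivial and $G$ is self-similar with respect to the refined data.

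For the finite-state statement, let $\varphi$ be the first representation, defined using right transversals $T_i=\{t_{i1},\dots,t_{im_i}\}$ of $H_i$ in $G$; by hypothesis $\varphi$ is finite-state. For each $i$ fix a right transversal $R_i=\{r_{i1}=1,r_{i2},\dots,r_{in_i}\}$ of $K_i$ in $H_i$; then $S_i=\{\,r_{il}t_{ij}\,\}$ is a right transversal of $K_i$ in $G$ of size $m_in_i$, which I use to define the second representation. A direct computation with Schreier functions gives, for $g\in G$,
$$\overline\theta_i(g,r_{il}t_{ij})=r_{il}\,\theta_i(g,t_{ij})\,r_{il'}^{-1}\in K_i ,$$
where $r_{il'}$ is the $R_i$-representative of the coset $K_i\,r_{il}\,\theta_i(g,t_{ij})$ in $H_i$; applying $f_i$, a homomorphism on $H_i\supseteq K_i$, yields the key identity
$$\overline\theta_i(g,r_{il}t_{ij})^{f_i}=r_{il}^{f_i}\cdot\theta_i(g,t_{ij})^{f_i}\cdot\bigl(r_{il'}^{f_i}\bigr)^{-1}.$$
Writing $\rho_{i,j}(x):=\theta_i(x,t_{ij})^{f_i}$, this says that the state-content operations of the second representation are obtained from those of the first by left multiplication by one of the finitely many elements $r_{il}^{f_i}$ and right multiplication by some $(r_{il'}^{f_i})^{-1}$, the latter being determined by the argument.

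Next I would iterate. For a word of length $k$ in these second-representation operations applied to $g$, repeated use of the cocycle relation $\rho_{i,j}(xy)=\rho_{i,j}(x)\,\rho_{i,j^{\sigma(x)}}(y)$ lets one rewrite the result as $L\,u\,R$, where $u$ lies in $\mathcal S(g)$ — the set of elements obtained from $g$ by iterating the maps $x\mapsto\theta_i(x,t)^{f_i}$, which is finite because $\varphi$ is finite-state — while $L$ lies in the set $\mathcal L$ obtained by closing $\{1\}$ under all $\rho_{i,j}$ and under left multiplication by the elements $r_{il}^{f_i}$, and $R$ lies in the symmetric set $\mathcal R$. As $\mathcal L$ and $\mathcal R$ do not depend on $g$, it suffices to show they are finite: then $\mathcal L\cdot\mathcal S(g)\cdot\mathcal R$ is a finite set containing $g$ and closed under the second-representation operations, so $g$ is finite-state there, and $g$ being arbitrary finishes the proof.

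The finiteness of $\mathcal L$ (and $\mathcal R$) is the crux and the main obstacle, and it is precisely where finite-stateness of $\varphi$ is used. Since each $r_{il}^{f_i}$ is then finite-state, the set $E:=\bigcup_{i,l}\mathcal S(r_{il}^{f_i})$ is finite, and $\mathcal L$ lies inside the positive subsemigroup generated by $E$ — but that semigroup is generally infinite, so one must argue that only finitely many of its elements actually occur. My plan here is a bookkeeping induction: in the recursion $L_k=r_{i_kl_k}^{f_{i_k}}\cdot\rho_{i_k,j_k}(L_{k-1})$ the new left factor and the map applied are tied to the same index $i_k$, so $L_k=\bigl(r_{i_kl_k}\,\theta_{i_k}(L_{k-1},t_{i_kj_k})\bigr)^{f_{i_k}}$ with the inner element lying in $H_{i_k}$; tracking these pre-images in the $H_i$, and using the finiteness of each $\mathcal S(r_{il}^{f_i})$ to absorb the correction factors, one shows inductively that the pre-images, hence $\mathcal L$, range over a finite set, and $\mathcal R$ likewise. (The same content can be packaged more geometrically via the $G$-equivariant surjection $\mathcal T(S_1\cup\dots\cup S_s)\to\mathcal T(T_1\cup\dots\cup T_s)$ sending $r_{il}t_{ij}\mapsto t_{ij}$: finite-stateness of the lifted action on the larger tree reduces to finite-stateness of the ``twisting'' elements $r_{il}^{f_i}$ on the smaller one.)
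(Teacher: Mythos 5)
Your treatment of the self-similarity half is exactly the paper's: any $N\leqslant\bigcap_i K_i$ that is normal in $G$ and invariant under each $f_i|_{K_i}$ lies in the $\mathbf F$-core of $(\mathbf m,\mathbf H,\mathbf F)$, which is trivial. Likewise your key identity $\overline\theta_i(g,r_{il}t_{ij})^{f_i}=r_{il}^{f_i}\,\theta_i(g,t_{ij})^{f_i}\,(r_{il'}^{f_i})^{-1}$ is the same Schreier computation displayed in the paper's proof (with the transversal $L_iT_i$ playing the role of your $S_i$). Up to that point the two arguments coincide.

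The finite-state half, however, is not finished, and the missing step is exactly the one you flag as ``the crux'': the finiteness of $\mathcal L$ and $\mathcal R$, i.e. of the closure of $\{1\}$ under the twisted operations $x\mapsto r_{il}^{f_i}\,\theta_i(x,t_{ij})^{f_i}$ and their right-handed analogues. The ``bookkeeping induction'' you sketch does not close: if $L_{k-1}$ ranges over a finite set $F_{k-1}$, then the pre-images $r_{i_kl_k}\,\theta_{i_k}(L_{k-1},t_{i_kj_k})$ and hence the $L_k$ range over some finite set $F_k$, but nothing in your argument forces $\bigcup_k F_k$ to be finite -- unwinding the recursion, $L_k$ is a product of $k$ factors drawn from a fixed finite alphabet of iterated states of the elements $r_{il}^{f_i}$, which is precisely the obstruction you yourself identify when you observe that the semigroup generated by $E$ is generally infinite. ``Using the finiteness of each $\mathcal S(r_{il}^{f_i})$ to absorb the correction factors'' is an assertion, not a mechanism: no telescoping, eventual triviality of the twisting factors, or a priori bound on the $L_k$ is exhibited, so the inductive step has no hypothesis strong enough to carry it. For comparison, the paper at this juncture takes a finite $V\supseteq\bigcup_i L_i^{f_i}$ closed under the ordinary state maps and asserts that $U_0=VUV$ is closed under the new state maps -- that is, it treats as immediate exactly the absorption property (that $r_{il}^{f_i}\theta_i(v,t)^{f_i}$ falls back into $V$) whose proof you correctly single out as the real issue. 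So you have located the delicate point, arguably more candidly than the paper's own write-up, but your proposal does not prove it, and therefore does not establish the ``moreover'' clause of the lemma.
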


\begin{proof}
Since $\mathbf{\overline{F}}$-core$(\mathbf{K}) \leq\mathbf{F}$-core$(\mathbf{H})$, the group $G$ is self-similar with respect to the data $(\mathbf{n},\mathbf{K},\mathbf{\overline{F}})$.

Let $T_i$ be a transversal of $H_i$ in $G$, and $L_i$ be a transversal of $K_i$ in $H_i$, for $i=1,\dots,s$. For any $i \in \{1, \dots, s\}$, define $L_iT_i$ as a transversal of $K_i$ in $G$.

Suppose that the transversals $T_1,\dots, T_s$ induce a  finite-state self-similar representation of $G$ with respect to $f_{1}, \dots, f_{s}$, respectively. Consider $\varphi$ and $\dot{\varphi}$ as representations of $G$ with respect to $(\mathbf{m},\mathbf{H},\mathbf{F})$ and $(\mathbf{n},\mathbf{K},\mathbf{\overline{F}})$, respectively.

$$\varphi: G \to \mathcal{A}_{m}$$
$$ \, \, \, \, \, \, \, \, \, \, \, \, \, \, \, \, \, \, \, \, \, \, \, \, \, \, \, \, \, \, \, \, \, \, \, \, \, \, \, \, \, \, \, \, \, \, \, \, \, \, \, \, \, \, \, \, \, \,  g \mapsto g^{{\varphi}}=((\theta_{T_{i}}(g,t)^{f_i})^{\varphi}\,\,|\,\,t\in T_i, 1\leq i \leq s)\,\,g^\sigma,$$

$$\dot{\varphi}: G \to \mathcal{A}_{n}, \text{ where } n = m_{1}n_{1} + \dots + m_{s}n_{s}$$
$$ \, \, \, \, \, \, \, \, \, \, \, \, \, \, \, \, \, \, \, \, \, \, \, \, g \mapsto g^{{\dot{\varphi}}}=((\theta_{L_{i}T_{i}}(g,lt)^{f_{ij}})^{\dot{\varphi}}\,\,|\,\,l \in L_{i}, t \in T_{i}, 1\leq i \leq s)\,\,g^{\dot{\sigma}}.$$

\,

Note that for each $g \in G$, we have
$$\theta_{L_{i}T_{i}}(g,l_bt_a)^{f_{i}} = \theta_{L_{i}}(\theta_{T_{i}}(g,t_{a}), l_{b})^{f_{i}} =$$ 
$$ = (l_bt_ag(t_{(a)\sigma(g)}^{-1})l_{(b)\dot{\sigma}(\theta(g, t_{a}))}^{-1})^{f_{i}} = l_b^{f_{i}}(t_ag(t_{(a)\sigma(g)}^{-1}))^{f_{i}}(l_{(b)\dot{\sigma}(\theta(g, t_{a}))}^{-1})^{f_{i}}.$$ 
Thus, there are finite subsets $U$ and $V$ of $G$ such that $g \in U$, $(U^{T})^{\varphi} \subset U$, $L_{i}^{f_{i}} \subset V, (1 \leq i \leq s)$, and $(V^T)^{\varphi} \subset V$. Then, $U_{0} = VUV$ is a finite subset of $G$ such that $g \in U_{0}$ and $(U_{0}^{LT})^{\dot{\varphi}} \subset U_{0}$.

\end{proof}


Let $G$ be a finite-state self-similar group with respect the data
$$((\underbrace{m,\dots,m}_\text{$s$ times}), (\underbrace{H, \dots, H}_\text{$s$ times}), (f_{1}, \dots, f_{s})).$$ Consider the $G^s$-datas $\left( \mathbf{
m_{1},H_{1},F_{1}} \right)$ and $\left( \mathbf{
m_{2},H_{2},F_{2}}\right)$ given respectively by
$$\left( \mathbf{
m_{1},H_{1},F_{1}} \right) = ((\underbrace{m^s, \dots, m^s}_\text{$s$ times}), (\underbrace{H^s, \dots, H^s}_\text{$s$ times}), (\rho_{1}, \dots, \rho_{s}))$$
where $\rho_{(1)\sigma^i}: H^s \rightarrow G^s$ extends the map
$$\rho_{(1)\sigma^i}: (h_1,\dots,h_s) \mapsto (h_1^{f_{(1)\sigma^i}},\dots, h_s^{f_{(s)\sigma^i}}),$$
$\sigma = (1 \, 2 \, \cdots s) \in Sym(\{1, 2, \dots, s\})$, and $i \in \{1, 2, \dots, s\}$;

$$\left( \mathbf{
m_{2},H_{2},F_{2}} \right) = ((m^s, 1), (H^s, G^s), (\rho, \tau)),$$
where $\rho: H^s \rightarrow G^s$ and $\tau: G^s \rightarrow G^s$ extends, respectively, the maps
$$\rho: (h_1,\dots,h_s) \mapsto (h_1^{f_1},\dots, h_s^{f_s}),$$
$$\tau: (g_1,\dots,g_s) \mapsto (g_s, g_1\dots, g_{s-1}).$$

\begin{lemma}
    In the above hypotheses, both the $G^s$-datas $\left( \mathbf{
m_{1},H_{1},F_{1}} \right)$ and $\left( \mathbf{
m_{2},H_{2},F_{2}}\right)$ yilds $G^s$
    as finite-state self-similar.
\end{lemma}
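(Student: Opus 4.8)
The plan is to verify two things for each of the two $G^s$-data: that its $\mathbf{F}$-core is trivial, so that the Proposition realizes $G^s$ as self-similar, and that the induced representation is finite-state.

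For triviality of the core I would argue by projecting to coordinates. Let $N$ be the $\mathbf{F}_1$-core of $(\mathbf{m_1},\mathbf{H_1},\mathbf{F_1})$, so $N\vartriangleleft G^s$, $N\leqslant H^s$, and $N^{\rho_{(1)\sigma^i}}\leqslant N$ for all $i$. Write $\pi_j\colon G^s\to G$ for the $j$-th projection and $N_j=\pi_j(N)$; then $N_j\vartriangleleft G$ (by normality of $N$), $N_j\leqslant H$ (by $N\leqslant H^s$), and, since $\rho_{(1)\sigma^i}$ acts coordinatewise sending the $j$-th coordinate through $f_{(j)\sigma^i}$, the invariance of $N$ gives $N_j^{\,f_{(j)\sigma^i}}\leqslant N_j$. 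The key observation is that, with $j$ fixed, $(j)\sigma^i$ runs over all of $\{1,\dots,s\}$ as $i$ does; hence each $N_j$ is a subgroup of $H$ that is normal in $G$ and $f_k$-invariant for every $k$, so by hypothesis $N_j$ lies in the trivial original $\mathbf{F}$-core, and $N\leqslant N_1\times\cdots\times N_s=1$. The same projection argument handles $(\mathbf{m_2},\mathbf{H_2},\mathbf{F_2})$, where $\bigcap_iH_i=H^s\cap G^s=H^s$ and the core $N$ satisfies $N^\rho\leqslant N$ and $N^\tau\leqslant N$: since $\tau$ is a finite-order automorphism of $G^s$, $N^\tau=N$, which forces all $N_j$ to coincide, and $N^\rho\leqslant N$ then makes this common value $f_j$-invariant for each $j$, hence again trivial.

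For the finite-state property I would pick the transversals compatibly with the product structure. Fix a right transversal $T$ of $H$ in $G$, and take $T\times\cdots\times T$ ($s$ factors) as transversal of $H^s$ in $G^s$ in every block, together with the trivial transversal for the degree-$1$ block of $(\mathbf{m_2},\mathbf{H_2},\mathbf{F_2})$. Since a coset of $H^s$ is a product of cosets of $H$, the Schreier function of $H^s$ factors coordinatewise; composing it with $\rho_{(1)\sigma^i}$ (or with $\rho$), a first-level section of $(g_1,\dots,g_s)$ has $j$-th coordinate of the form $\theta_H(g_j,t)^{f_k}$ for some $t\in T$ and some $k$, while the unique section through the degree-$1$ block of $(\mathbf{m_2},\mathbf{H_2},\mathbf{F_2})$ is just the cyclic permutation $(g_1,\dots,g_s)^\tau$. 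Now, given an arbitrary $(g_1,\dots,g_s)\in G^s$, use that $G$ is finite-state to choose, for each $j$, a finite $U_j\subset G$ with $g_j\in U_j$ closed under the operations defining finite-stateness; a finite union of such sets is again closed, so $U_0=U_1\cup\cdots\cup U_s$ is finite, contains every $g_j$, and is closed under all the maps $x\mapsto\theta_H(x,t)^{f_k}$ with $t\in T$ and $1\leqslant k\leqslant s$. By the coordinatewise description, $U_0^{\,s}\subset G^s$ is then finite, contains $(g_1,\dots,g_s)$, and is closed under first-level sections in both representations (cyclic permutation of coordinates preserves $U_0^{\,s}$); hence $(g_1,\dots,g_s)$ is finite-state in each, as required.

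I expect the only mildly delicate points to be the bookkeeping of the cyclic shift $(j)\sigma^i$ in $(\mathbf{m_1},\mathbf{H_1},\mathbf{F_1})$ --- checking that it sweeps all indices when $j$ is fixed and $i$ varies, which is exactly what forces each coordinate projection of the core into the original $\mathbf{F}$-core --- and, for $(\mathbf{m_2},\mathbf{H_2},\mathbf{F_2})$, the remark that $N^\tau\leqslant N$ may be upgraded to $N^\tau=N$ because $\tau$ has finite order; the rest, including the coordinatewise formula for the Schreier function of $H^s$, is routine and could alternatively be quoted from Lemma \ref{2.2}.
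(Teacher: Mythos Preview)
Your proposal is correct and follows essentially the same approach as the paper: project the core to each coordinate and use the triviality of the original $\mathbf{F}$-core, then use product transversals so that the Schreier function and the virtual endomorphisms act coordinatewise, reducing the finite-state question for $G^s$ to that for $G$. Your treatment of $(\mathbf{m_2},\mathbf{H_2},\mathbf{F_2})$ is in fact more explicit than the paper's (which merely says ``similar''): your observation that $N^{\tau}=N$ because $\tau$ has finite order, forcing all coordinate projections of $N$ to coincide, is exactly the right extra ingredient, and your remark that cyclic permutation preserves $U_0^{\,s}$ cleanly handles the degree-$1$ block. One small caveat: the closing aside that the coordinatewise Schreier formula ``could alternatively be quoted from Lemma~\ref{2.2}'' is off, since that lemma concerns restricting a data to smaller subgroups rather than passing to a direct power; but this does not affect your argument.
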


\begin{proof}

    Let us first prove that $G^s$ is a self-similar group with respect to $\left( \mathbf{
m_{1},H_{1},F_{1}} \right)$. Let $K\leqslant H^s$, $K \vartriangleleft G^s$ and $K^{\rho_i}\leqslant K$, $i=1,\dots, s$.  Define $\pi_i: H^s\to H$ as the projection of $H^s$ on its $i$-th coordinate. Then
$$K_i:=\pi(K)=\{h \in H\,\,|\,\, (x_1,\dots,x_{i-1},h,x_{i+1},\dots, x_s) \in K\}$$
is a subgroup of $H$ that is normal in $G$. Since $K$ is $\rho_i$ - invariant for all $i=1,\dots, s$, $K_i$ is  $f_i$ - invariant for all $i=1,\dots, s$. By the self-similarity of $G$, we conclude that $K_i=\{e\}$ for all $i=1,\dots,s$. As $K=K_1\times \cdots \times K_s$, we obtain that $G^s$ is self-similar.  

Now, we will prove that if $G$ finite-state, then $G^s$ is also finite-state. Consider $\varphi: G \to \mathcal{A}_{sm}$ a finite-state representation of $G$ with respect to the transversals $T_i=\{t_{i1},\cdots, t_{im}\} \,(i=1,\cdots,s)$ of $H$ in $G$. Then,
$$g^{{\varphi}}=((\theta_i(g,t)^{f_i})^{\varphi}\,\,|\,\,t\in T_i, 1\leq i \leq s)\,\,g^\sigma.$$
Note that $T_1\times \cdots \times T_s$ is a transversal of $H^s$ in $G^s$. With respect to this transversal, we have that $G^s$ has the following representation $$\dot{\varphi}: G^s \to \mathcal{A}_{sm^s}$$
$$(g_1,\cdots,g_s)^{{\dot{\varphi}}}=((\theta_i(g_1,t_{ij_1})^{f_{(1)^{\sigma^i}}})^{\varphi},\cdots, (\theta_i(g_s,t_{ij_s})^{f_{(s)^{\sigma^i}}})^{\varphi}\,\,|\,\,$$ $${t_{ij_k}}\in T_{i}, 1\leq j_1,\cdots,j_s\leq m, 1\leq i \leq s)\,\,g_1^\sigma\cdots g_s^\sigma.$$
Thus, each state of $G^s$ in the representation $\dot{\varphi}$ occurs as a state of $G$ with respect to $\varphi$, implying that $G^s$ is finite-state. 

The proof that $G^s$ is a finite-state self-similar group with respect to the data $\left( \mathbf{
m_{2},H_{2},F_{2}}\right)$ is similar. 

\end{proof}

\subsection{\noindent \textbf{General concatenation}. }

Let $G_{1}=A_{1}\rtimes U$, $G_{2}=A_{2}\rtimes U$ and $G=\left( A_{1}\oplus
A_{2}\right) \rtimes U$. For $i=1,2$, define the $G_{i}$-data $\left( \mathbf{
m_{i},H_{i},F_{i}}\right)$, where $\mathbf{m_{i}=}\left(
m_{i1},\dots,m_{is_{i}}\right) $, $\mathbf{H_{i}}=\{H_{i1},\dots,H_{is_{i}}\}$
and $\mathbf{F_{i}}=\{f_{i1},\dots,f_{is_{i}}\}$. Furthermore, define the data 
$G$-data $\left( \mathbf{m,H,F}\right)$, where $\mathbf{m}$ is the
concatenation $\left( \mathbf{m_{1},m_{2}}\right) $, $\mathbf{H}$ is the
concatenation 
\begin{eqnarray*}
\left( \tilde{H}_{1j}={A_{2}}
\cdot H_{1j}\text{ }\left( 1\leq j\leq s_{1}\right), \, \tilde{H}_{2k}= {A_{1}} \cdot H_{2k}\text{ }%
\left( 1\leq k\leq s_{2}\right) \right) \text{,}
\end{eqnarray*}%
and
\begin{equation*}
\mathbf{F=}(\tilde{f}_{11},\dots,\tilde{f}_{1s_{1}},\tilde{f}_{21},\dots,\tilde{%
f}_{2s_{2}})
\end{equation*}%
where $\tilde{f}_{1j}:\tilde{H}_{1j}\rightarrow G, \,\, 1\leq j \leq s_1,$ is defined by 
\begin{equation*}
\tilde{f}_{1j}:ah\mapsto h^{f_{1j}}\text{, for }a\in {A_{2}},h\in H_{1j}
\end{equation*}%
and $\tilde{f}_{2k}:\tilde{H}_{2k}\rightarrow G, \,\, 1\leq k \leq s_2,$ is defined by 
\begin{equation*}
\tilde{f}_{2k}:ah\mapsto h^{f_{2k}}\text{, for }a\in {A_{1}}\text{, }%
h\in H_{2k}\text{.}
\end{equation*}

\begin{proposition}\label{P3.2} In the given seetup, for $i=1,2$, let $G_{i}$ have its state-closed representation with respect
to $\left( \mathbf{m_{i},H_{i},F_{i}}\right) $. Likewise, let $G$ have its state-closed representation with respect
to $\left( \mathbf{m,H,F}\right)$. Suppose the above state-closed representations of $G_{1}$ and $G_{2}$
are faithful. Then so is the corresponding state-closed representation of $G$. If, in addition, $G_1$ and $G_2$ are finite-state, then $G$ is also finite-state.
\end{proposition}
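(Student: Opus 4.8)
The plan is to pull everything back, through the two quotient maps $\pi_1\colon G\twoheadrightarrow G_1$ (kernel $A_2$) and $\pi_2\colon G\twoheadrightarrow G_2$ (kernel $A_1$) and through $\beta\colon G\twoheadrightarrow U$ (kernel $A_1\oplus A_2$), to the given representations $\varphi_1,\varphi_2$ of $G_1,G_2$. Since $A_1\oplus A_2\vartriangleleft G$ and $A_1,A_2$ are $U$-invariant, $G$ splits both as $A_2\rtimes G_1$ and as $A_1\rtimes G_2$; identifying $G_1,G_2$ with these complements, $\pi_i$ restricts to the identity on $G_i$, while $\pi_2|_{G_1}=\beta|_{G_1}$ and $\pi_1|_{G_2}=\beta|_{G_2}$. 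By the definition of the concatenated data, $\widetilde H_{1j}=\pi_1^{-1}(H_{1j})$ and $\widetilde H_{2k}=\pi_2^{-1}(H_{2k})$, so $\bigcap_j\widetilde H_{1j}=\pi_1^{-1}\bigl(\bigcap_j H_{1j}\bigr)$ and likewise for the second block, and $\widetilde f_{1j}$ is the composite $\widetilde H_{1j}\xrightarrow{\ \pi_1\ }H_{1j}\xrightarrow{\ f_{1j}\ }G_1\hookrightarrow G$, dually for $\widetilde f_{2k}$.

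\emph{Faithfulness.} As $G$ carries the state-closed representation attached to $(\mathbf m,\mathbf H,\mathbf F)$, whose kernel is the $\mathbf F$-core, it suffices to show this $\mathbf F$-core is trivial. Let $N$ lie in it. Then $\pi_1(N)\leqslant\bigcap_j H_{1j}$ and $\pi_1(N)\vartriangleleft G_1$, and for $n\in N$ one has $\widetilde f_{1j}(n)=f_{1j}(\pi_1(n))\in N$, an element lying in $G_1$; applying $\pi_1$, which fixes $G_1$ pointwise, gives $f_{1j}(\pi_1(N))\leqslant\pi_1(N)$. Thus $\pi_1(N)$ is a normal, $f_{1j}$-invariant subgroup of $G_1$ inside $\bigcap_j H_{1j}$, hence lies in the $\mathbf F_1$-core, which is trivial since $\varphi_1$ is faithful; so $N\leqslant A_2$. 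Symmetrically $N\leqslant A_1$, and therefore $N\leqslant A_1\cap A_2=1$. (Alternatively: restrict $\varphi(g)$ to the sub-$m_1$-ary tree got by keeping only the first $m_1$ descendants at every vertex, note that this restricted action is exactly $\varphi_1(\pi_1(g))$, and invoke faithfulness of $\varphi_1$; similarly with the last $m_2$ descendants and $\varphi_2$.)

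\emph{Finite-state part.} I would first compute the sections of an arbitrary $g\in G$ in $\varphi$, using for $\widetilde H_{1j}$ a transversal $\widetilde T_{1j}$ lifted from a transversal $T_{1j}$ of $H_{1j}$ in $G_1$. Writing $g=a\,\pi_1(g)$ with $a\in A_2$, a short Schreier-function computation yields $\widetilde\theta_{1j}(g,t)=(tat^{-1})\,\theta_{1j}(\pi_1(g),t)\in A_2H_{1j}$, so that $\widetilde\theta_{1j}(g,t)^{\widetilde f_{1j}}=\theta_{1j}(\pi_1(g),t)^{f_{1j}}\in G_1$; dually the second-block sections of $g$ are $\theta_{2k}(\pi_2(g),t)^{f_{2k}}\in G_2$. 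Hence the first-level sections of $g$ in $\varphi$ are precisely the first-level $\varphi_1$-sections of $\pi_1(g)$, lying in $G_1$, together with the first-level $\varphi_2$-sections of $\pi_2(g)$, lying in $G_2$. Iterating and using that $\pi_1$ is the identity on $G_1$ whereas $\pi_2|_{G_1}=\beta|_{G_1}$ (and symmetrically), every state of $g$ in $\varphi$ lies in $\{g\}\cup G_1\cup G_2$; inside $G_1$ the first-block sections keep following $\varphi_1$, while the second-block sections of an $x\in G_1$ are the $\varphi_2$-sections of $\beta(x)$, and dually inside $G_2$. Consequently the state set of $g$ is contained in $\{g\}\cup Q_{\varphi_1}(\pi_1(g))\cup Q_{\varphi_2}(\pi_2(g))\cup Q_{\varphi_2}\!\bigl(\beta\,Q_{\varphi_1}(\pi_1(g))\bigr)\cup Q_{\varphi_1}\!\bigl(\beta\,Q_{\varphi_2}(\pi_2(g))\bigr)\cup\cdots$, obtained by alternately taking $\varphi_i$-states and projecting by $\beta$.

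It then suffices to produce finite subsets $\mathcal V_1\subseteq G_1$, $\mathcal V_2\subseteq G_2$ with $\pi_i(g)\in\mathcal V_i$, each $\mathcal V_i$ closed under $\varphi_i$-sections, and $\beta(\mathcal V_1)\subseteq\mathcal V_2$, $\beta(\mathcal V_2)\subseteq\mathcal V_1$; then $\{g\}\cup\mathcal V_1\cup\mathcal V_2$ is a finite set containing $g$ and closed under the $\varphi$-sections computed above, so $g$ is finite-state. I would build $\mathcal V_1,\mathcal V_2$ starting from the finite sets $Q_{\varphi_1}(\pi_1(g))$ and $Q_{\varphi_2}(\pi_2(g))$ — finite because $G_1,G_2$ are finite-state — and alternately enlarging by $Q_{\varphi_i}\bigl(\beta(\,\cdot\,)\bigr)$. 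The hard part is to show this process terminates, i.e.\ that the set of $U$-values $\bigcup_n\beta\bigl(\mathcal V_i^{(n)}\bigr)$ stays finite; I expect the key input to be a lemma saying that for $v\in U$ the finite set $Q_{\varphi_i}(v)$ is closed under $\beta$ — equivalently, the $U$-component of a $\varphi_i$-state of an element of $U$ is again a $\varphi_i$-state of it — which, together with idempotence of the state operator and finiteness of each $Q_{\varphi_i}(v)$, keeps the alternation inside a finite set. Establishing that lemma, and hence termination, is where essentially all of the work lies; the faithfulness assertion, by contrast, is just the short $\mathbf F$-core computation above.
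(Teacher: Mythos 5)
Your faithfulness argument is correct and complete: identifying $\widetilde H_{1j}=\pi_1^{-1}(H_{1j})$ and $\widetilde f_{1j}=f_{1j}\circ\pi_1$ (followed by the inclusion $G_1\hookrightarrow G$), showing that the image under $\pi_1$ of any normal, $\mathbf F$-invariant subgroup of the concatenated data lands in the $\mathbf F_1$-core, and concluding $N\leqslant A_1\cap A_2=1$, is exactly the expected core computation. (Note that the paper itself gives no proof of this proposition; it only cites \cite{DSS}, Proposition 5.2, so there is no in-paper argument to measure the details against.) Your computation of the first-level sections, $\widetilde\theta_{1j}(g,t)^{\widetilde f_{1j}}=\theta_{1j}(\pi_1(g),t)^{f_{1j}}$ and dually for the second block, is also correct and is the right reduction for the finite-state claim.

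The finite-state half, however, has a genuine gap, and you have located it yourself: everything hinges on showing that the alternating closure of a finite subset of $U$ under $S\mapsto\beta\bigl(Q_{\varphi_1}(S)\bigr)$ and $S\mapsto\beta\bigl(Q_{\varphi_2}(S)\bigr)$ is finite, and this is precisely what is not proved. The lemma you propose as the key input --- that for $v\in U$ the set $Q_{\varphi_i}(v)$ is closed under $\beta$ --- is neither established nor plausible in this generality: the virtual endomorphisms $f_{ij}$ are not assumed to carry $A_i\cap H_{ij}$ into $A_i$, so they do not induce maps on $U=G_i/A_i$, and the $U$-component of a $\varphi_i$-state of $v$ has no a priori relation to the states of $v$ (compare the endomorphism $f_a$ in the proof of Corollary \ref{C1}, which sends the $U$-element $x$ to $a^xx\notin U$). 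Worse, even if that lemma were granted, it would not yield termination: it only says the $U$-values produced while expanding a fixed $v$ inside one representation stay in the finite set $Q_{\varphi_i}(v)\cap U$, but at the next pass each such value $w$ is fed into the \emph{other} representation, and nothing relates $Q_{\varphi_{3-i}}(w)\cap U$ to any set already constructed; finiteness of each individual state set does not bound the union over infinitely many alternations. So the part of the statement you describe as ``where essentially all of the work lies'' is indeed missing, and the proposal as it stands proves only the faithfulness assertion.
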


The above proposition was established in \cite{DSS}, see Proposition 5.2.

\section{The Parabolic Subgroup}

Let $G$ be a group, $m\geq 2$ and $\left(\mathbf{m},\mathbf{H,F}\right)$ be a $G$-data. If $L$ is a subset of $G$, denote $L^{f_{i}^{-1}}$ as the subset $\{h \in H_{i} \mid h^{f_i} \in L \}$ of $H_{i}$. We say that
the subgroup $H_{\omega} = \bigcap_{k=0}^{\infty} W_{k}$, where
$$ W_{0} = G, W_{1} = \bigcap_{i=1}^{s} W_{0}^{f_{i}^{-1}},  W_{2} = \bigcap_{i=1}^{s} W_{1}^{f_{i}^{-1}}, \dots, W_{k} = \bigcap_{i=1}^{s} W_{k-1}^{f_{i}^{-1}},$$
is the \textit{parabolic subgroup} of $G$. It is immediate that
$$H_{\omega} = \langle K \leq \cap_{i=1}^{s} H_{i} \mid K^{f_{i}} \leq K, \text{ for all }i = 1, \dots, s \rangle.$$

\begin{thmA} \label{T2.5}
A simple $G$-data $({\bf m}, {\bf H}, {\bf F})$ induces a simple $G$-data $({\bf l}, {\bf K}, {\bf E})$ with trivial parabolic subgroup. Moreover, if the representation of $G$ induced by the $G$-data $({\bf m}, {\bf H}, {\bf F})$ is finite-state, then the representation induced by the $G$-data $({\bf l}, {\bf K}, {\bf E})$ is also.
\end{thmA}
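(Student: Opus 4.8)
The plan is to build the new data $({\bf l},{\bf K},{\bf E})$ from $({\bf m},{\bf H},{\bf F})$ by a single refinement step: replace each $H_i$ by a subgroup $K_i \leq H_i$ chosen so that the parabolic subgroup of the refined data is trivial, while $f_i$ simply restricts to $K_i$, i.e. $E_i = f_i|_{K_i}$. Since refining the $H_i$ can only shrink the ${\bf F}$-core (as already observed in Lemma~\ref{2.2}, $\overline{{\bf F}}\text{-core}({\bf K}) \leq {\bf F}\text{-core}({\bf H})$), simplicity is preserved for free, and the finite-state conclusion is exactly the moreover-part of Lemma~\ref{2.2}. So the entire content is: choose the $K_i$ correctly and verify that the new parabolic subgroup vanishes. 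To guess the right $K_i$, recall the descending chain $W_0 = G \supseteq W_1 \supseteq W_2 \supseteq \cdots$ with $W_{k} = \bigcap_{i=1}^s W_{k-1}^{f_i^{-1}}$ and $H_\omega = \bigcap_k W_k$. Because the data is simple, $H_\omega$ contains no nontrivial subgroup normal in $G$ and $f_i$-invariant for all $i$; the key point is that $H_\omega$ itself satisfies $H_\omega^{f_i} \leq H_\omega$ for each $i$, so $H_\omega$ is $f_i$-invariant, hence the obstruction to $H_\omega$ being in the ${\bf F}$-core is exactly its failure to be normal in $G$ — but in fact $H_\omega$ \emph{is} a candidate in the generating set defining $H_\omega$, so the only way simplicity forces nothing is that $H_\omega$ need not be normal. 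This is the subtlety I will have to handle.

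The mechanism I expect to use is a finiteness/stabilization argument on the chain $(W_k)$. First I would observe that each $W_k$ has finite index in $G$: $W_1 = \bigcap_i H_i$ has finite index since each $[G:H_i] = m_i < \infty$, and inductively $W_{k}^{f_i^{-1}}$ has index in $H_i$ at most $[G:W_{k-1}]$ (the preimage of a finite-index subgroup under a homomorphism has index bounded by that of the target), so $[G:W_k] < \infty$ for all $k$. Since there are only finitely many subgroups of $G$ of any given bounded index — wait, that need not hold for general $G$; instead I would argue more carefully: the relevant statement is that $H_\omega$ has finite index in $G$ \emph{provided the chain stabilizes}, and one shows the chain stabilizes after finitely many steps because $f_i$ maps $W_k$ into $W_{k-1}$ and one tracks indices $[W_{k-1}:W_k]$, which form a non-increasing sequence of positive integers once the picture is set up correctly, forcing eventual equality $W_N = W_{N+1} = H_\omega$. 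Then $H_\omega$ has finite index in $G$, so I may take $K_i = H_\omega$ for all $i$ (with $s$ kept the same, ${\bf l} = (l_1,\dots,l_s)$ where $l_i = [G:H_\omega]$, and $E_i = f_i|_{H_\omega}$), since $H_\omega^{f_i} \leq H_\omega \leq H_i$ ensures $f_i|_{H_\omega}$ maps into $G$ as required.

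With $K_i = H_\omega$ for every $i$, the new parabolic subgroup $H_\omega'$ is computed from the chain $W_0' = G$, $W_1' = \bigcap_i W_0'^{E_i^{-1}} = \bigcap_i \{h \in H_\omega : h^{f_i} \in G\} = H_\omega$, and then $W_2' = \bigcap_i \{h \in H_\omega : h^{f_i} \in H_\omega\} = H_\omega$ because $H_\omega^{f_i} \leq H_\omega$ already, so the chain is constant from level $1$ onward and $H_\omega' = H_\omega$ — which is the opposite of what we want. So this naive choice fails, and the real argument must instead pass to a \emph{quotient}: the induced representation $\varphi: G \to \mathcal{A}_m$ has kernel the ${\bf F}$-core, which is trivial by simplicity, so $G$ embeds in $\mathcal{A}_m$ and I would reinterpret the parabolic subgroup as the stabilizer of an infinite ray in the tree $\mathcal{T}_m$ under this action. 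The clean move is then: since $H_\omega$ is $f_i$-invariant for all $i$ but has trivial ${\bf F}$-core, $\bigcap_{g \in G} (H_\omega)^g = \{e\}$, i.e. $H_\omega$ is \emph{core-free} in $G$. Now enlarge $s$: for the new data take the finitely many subgroups $(H_\omega)^{g}$ for $g$ ranging over a transversal of $H_\omega$ in $G$ (finite by the index argument above), with $E$ given by the conjugated maps $h \mapsto (h^{g^{-1}})^{f_i \cdot g}$ suitably composed — here the hard part, and the step I expect to be the main obstacle, is checking that these conjugated virtual endomorphisms are well-defined homomorphisms into $G$ and that the parabolic subgroup of the enlarged data is $\bigcap_g (H_\omega)^g = \{e\}$, while simplicity still holds because the ${\bf F}$-core of the enlarged data, being contained in each $(H_\omega)^g$, is contained in the core of $H_\omega$, hence trivial. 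The finite-state claim then follows by combining Lemma~\ref{2.2}-type restriction stability with the fact that conjugation by a fixed element of $G$ preserves finite-stateness (states of $h^g$ are conjugates of states of $h$), so only finitely many new states are introduced.
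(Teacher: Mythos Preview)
Your proposal has a genuine gap at the stabilization step. You argue that the chain $W_0 \supseteq W_1 \supseteq \cdots$ eventually stabilizes, so that $H_\omega = W_N$ has finite index in $G$, and then you want to take a (finite) transversal of $H_\omega$ in $G$ to build the enlarged family of conjugated virtual endomorphisms. But while each individual $W_k$ does have finite index, the chain need not stabilize and $H_\omega$ can have \emph{infinite} index in $G$. The paper itself provides examples: for $G = C_2 \wr \mathbb{Z} = \langle a \rangle \wr \langle x \rangle$ with the single virtual endomorphism $f:G'\langle x\rangle \to G$ given by $[a,x]\mapsto a$, $x\mapsto x$, one computes $H_\omega = \langle x \rangle$, which has infinite index (the coset space is the infinite base group $C_2^{(\mathbb{Z})}$). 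Your hand-wave that the indices $[W_{k-1}:W_k]$ are ``non-increasing once set up correctly'' is not justifiable; these indices can grow without bound. Consequently your proposed data $({\bf l},{\bf K},{\bf E})$ would involve infinitely many virtual endomorphisms, which is not allowed.

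The paper's fix is exactly the idea you gesture at in your last paragraph, but with the correct finite-index subgroup: instead of $H_\omega$, use $H_G$, the normal core of $\bigcap_{i=1}^s H_i$ in $G$, which automatically has finite index $r = [G:H_G]$. One first passes (via Lemma~\ref{2.2}) to the data $((r,\dots,r),(H_G,\dots,H_G),(f_1,\dots,f_s))$, then enlarges the family to the $sr$ conjugated maps $f_{ij}:H_G\to G$, $h\mapsto (t_{ij}^{-1}ht_{ij})^{f_i}$, where $\{t_{i1},\dots,t_{ir}\}$ is a transversal of $H_G$ in $G$. The parabolic $K_\omega$ of this new data is trivial: for $x\in K_\omega$ and any $g = ht_{ij}\in G$ one has $(x^g)^{f_i} = (x^{f_{ij}})^{h^{f_{ij}}} \in K_\omega^G$, so the normal closure $K_\omega^G$ is $f_i$-invariant for every $i$ and hence lies in the ${\bf F}$-core, which is trivial by simplicity. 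Your intuition that the new parabolic should be a core (an intersection of conjugates) and therefore fall into the ${\bf F}$-core was right; the point is that one only needs to conjugate over a transversal of $H_G$, not of $H_\omega$.
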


\begin{proof}
Consider
$$({\bf m}, {\bf H}, {\bf F}) = ((m_1,\dots,m_s), (H_1,\dots, H_s), (f_1,\dots,f_s)),$$ and $H_{G}$ the core of the subgroup $\cap_{i=1}^{s} H_i$ in $G$. By Lemma \ref{2.2}, the group $G$ is self-similar with respect to the data
$$({\bf m_{1}}, {\bf H_{1}}, {\bf F_{1}}) = ((\underbrace{r,\dots,r}_\text{$s$ times}),(\underbrace{H_{G}, \dots, H_{G}}_\text{$s$ times}),(f_{1}, f_{2}, \dots,f_{s})).$$

Let $T_1, \dots, T_s$ be transversals of $H_{G}$ in $G$, where $T_{i} = \{t_{i1} = e, t_{i2}, ..., t_{ir} \}$. Define the endomorphism
\begin{eqnarray*}
f_{ij}: H_{G}&\rightarrow& G\\
h &\mapsto& h^{t_{ij}f_i} = (t_{ij}^{-1} h t_{ij})^{f_{i}}
\end{eqnarray*}
for each $i=1, 2, \dots, s$ and each $j = 1, 2, \dots, r$, and define the $G$-data $({\bf l}, {\bf K}, {\bf E})$ by
$$
((\underbrace{r,\dots,r}_\text{$sr$ times}),(\underbrace{H_{G}, \dots, H_{G}}_\text{$sr$ times}),(f_{11}, f_{12}, \dots,f_{1r},\dots, f_{s1}, f_{s2}, \dots, f_{sr})).    
$$
The parabolic subgroup $K_\omega$ of the data $({\bf l}, {\bf K}, {\bf E})$ is
$$K_\omega = \langle K \leq H_{G} \mid  K^{f_{ij}}\leq K, \, \forall (i, j) \in \{1,\dots,s\} \times \{1, \dots, r\} \rangle.$$
Set $x \in K_{\omega}$ and $g \in G$. There exist $t_{j} \in T$ and $h \in H_{G}$ such that $g = ht_{ij}$. For each $i = 1, \dots, s$ we have $$(x^g)^{f_{i}}  = (x^{ht_{ij}})^{f_{i}}  = x^{hf_{ij}} = (x^{f_{ij}})^{h^{fij}} = y^{g_{1}},$$
where $x^{f_{ij}} = y \in K_{\omega}$ and $h^{f_{ij}} = g_{1}$. Thus $K_\omega^{G}$ is $f_{i}$-invariant for each $i=1,\dots,s$ and since the date
$({\bf m}, {\bf H}, {\bf F})$
is simple, it follows that  $K_{\omega} = 1$. Therefore, the data $({\bf l}, {\bf K}, {\bf E})$ induces a self-similar representation of $G$ with trivial parabolic subgroup.

Now, by Lemma \ref{2.2}, we can assume that the transversals $T_1, \dots, T_s$ of $H_{G}$ in $G$ induce a finite-state self-similar representation of $G$ with respect to $f_{1}, \dots, f_{s}$, respectively. For any $i \in \{1, \dots, s\}$, define $T_{ij} = x_j T_i$, where $x_{j} \in T_{i}$, and $j \in \{1, \dots, r\}$.  We claim that the transversals $T_{11}, \dots, T_{1r}, \dots, T_{s1}, \dots, T_{sr}$ of $H_{G}$ in $G$ induce a finite-state self-similar representation of $G$ with respect to $f_{11}, \dots,f_{1r}, \dots, f_{s1}, \dots, f_{sr}$, respectively. Let 
$$\varphi: G \to \mathcal{A}_{sr}$$
$$ \, \, \, \, \, \, \, \, \, \, \, \, \, \, \, \, \, \, \, \, \, \, \, \, \, \, \, \, \, \, \, \, \, \, \, \, \, \, \, \, \, \, \, \, \, \, \, \, \, \, \, \, \, \, \, \, \, \,  g \mapsto g^{{\varphi}}=((\theta_i(g,t)^{f_i})^{\varphi}\,\,|\,\,t\in T_i, 1\leq i \leq s)\,\,g^\sigma$$
and
$$\dot{\varphi}: G \to \mathcal{A}_{sr^2}$$
$$ \, \, \, \, \, \, \, \, \, \, \, \, \, \, \, \, \, \, \, \, \, \, \, \, g \mapsto g^{{\dot{\varphi}}}=((\theta_{ij}(g,t)^{f_{ij}})^{\dot{\varphi}}\,\,|\,\,t\in T_{ij}, 1\leq i \leq s, 1\leq j \leq r )\,\,g^{\dot{\sigma}}$$
the representations of $G$ with respect to the $G$-datas $({\bf m_{1}}, {\bf H_{1}}, {\bf F_{1}})$ and $({\bf l}, {\bf K}, {\bf E})$, respectively. Note that if $t \in T_{ij}$, then $t = x_{j}t_{l}$ for some $t_{l} \in T_{i}$, and so
$$\theta_{ij}(g, t) = \theta_{ij}(g, x_{j}t_{l}) = x_{j}t_{l}g(x_{j}t_{(l)\sigma_{g}})^{-1} = (t_{l}gt_{(l)\sigma_{g}}^{-1})^{x_{j}^{-1}}.$$
Indeed, there exists $h \in H_{G}$ such that $t_{l} g = h t_{(l)\sigma(g)}$, and so
$$x_{j} t_{l} g = x_{j} h t_{(l)} = h^{x_{j}^{-1}} x_{j} t_{(l)\sigma(g)} \in H_{G} x_{j} t_{(l)\sigma(g)}.$$
Thus, the permutation of $T_{i}$ induced by $g$ is the same as the permutation of $T_{ij}$ induced by $g$, and so


\begin{equation*}
			\begin{split}
	g^{{\dot{\varphi}}} &= ((\theta_{ij}(g,t)^{f_{ij}})^{\dot{\varphi}}\,\,|\,\,t\in T_{ij}, 1\leq i \leq s, 1\leq j \leq r )\,\,g^{\dot{\sigma}}  \\ & =((((t_{l}gt_{(l)\sigma_{g}}^{-1})^{x_{j}^{-1}})^{x_{j}f_{i}})^{\dot{\varphi}}\,\,|\,\, x_{j}t_{l} \in T_{ij}, 1\leq i \leq s, 1\leq j \leq r )\,\,g^{\dot{\sigma}}\\& =(((t_{l}gt_{(l)\sigma_{g}}^{-1})^{f_{i}})^{\dot{\varphi}}\,\,|\,\, x_{j}t_{l} \in T_{ij}, 1\leq i \leq s, 1\leq j \leq r )\,\,g^{\dot{\sigma}} \\ &= ((\theta_i(g,t_{l})^{f_i})^{\dot{\varphi}}\,\,|\,\, x_{j}t_{l} \in T_{ij}, 1\leq i \leq s, 1\leq j \leq r )\,\,g^{\dot{\sigma}}.\\
		\end{split}
		\end{equation*}
The result follows.

\end{proof}

\begin{ex} \label{ex3.2}
Consider the group $G = \mathbb{Z} \wr \mathbb{Z} = \langle y \rangle \wr \langle x \rangle$. The $G$-data 
$$({\bf m}, {\bf H}, {\bf F}) = ((2, 1), (H, G), (f_{1}, f_{2}))$$ 
 where $H = \langle y \rangle^{\langle x \rangle}\langle x^2 \rangle$ and the homomophisms $f_{1}: H \rightarrow G$ and $f_{2}: G \rightarrow G$ extend respectively the maps
 $$y^{x^{2n}} \mapsto y^{x^{n}}, \, y^{x^{2n+1}} \mapsto 1, \, x^{2n} \mapsto x^{n}, n \in \mathbb{Z},$$
 and
 $$y \mapsto x, \, x \mapsto 1,$$
 induces the self-similar finite-state faithful representation
 $$G \simeq \langle \gamma = (\gamma, e, \alpha), \alpha = (e, \alpha, e)(1 \, 2) \rangle \leq \mathcal{A}_{3},$$
 see \cite[Theorem C]{DSS}. Note that the parabolic subgroup $H_{\omega}$ of the $G$-data $({\bf m}, {\bf H}, {\bf F})$ is not trivial since
 $$1 = (y^{x^3 - x})^{f_{1}} = (y^{x^3 - x})^{f_{2}}.$$
 
 As $H$ is normal in $G$, $T = \{1, x\}$ is a transversal of $H$ in $G$, and $f_{2}|_{H} = xf_{2}|_{H}$, applying the Theorem 3.1 we have that the $G$-data 
 $$({\bf l}, {\bf K}, {\bf E}) = ((2, 2, 2), (K_{1} = H, K_{2} = H, K_{3} = H), (f_{1}, xf_{1}, f_{2}|_{H}))$$
 has trivial parabolic subgroup $K_{\omega}$ and induces the faithful self-similar finite-state representation
 $$G \simeq \langle \gamma_{1} = (\gamma_{1}, e, e, \gamma_{1}, \alpha_{1}, \alpha_{1}), \alpha_{1} = (e, \alpha_{1}, e, \alpha_{1}, e, e)(1 \, 2)(3 \, 4) \rangle \leq \mathcal{A}_{6}.$$
\end{ex}

\,

Let $n$ be a positive integer and $\sigma = (1 \, 2 \, \dots \, s) \in Sym(\{1, 2, \dots, s\})$. For $\delta = (\delta_{1}, \dots, \delta_{n}) \in \{1, \dots, s\}^{n}$, consider $W_{\delta, 0} = G$ and  

$$W_{\delta,1} = \bigcap_{i = 1}^{s} W_{\delta, 0}^{f_{(i)(\sigma^{\delta_{n}})}^{-1}f_{(i)(\sigma^{\delta_{n-1}})}^{-1}...f_{(i)(\sigma^{\delta_{1}})}^{-1}},$$

$$W_{\delta,2} = \bigcap_{i = 1}^{s} (W_{\delta,1})^{f_{(i)(\sigma^{\delta_{n}})}^{-1}f_{(i)(\sigma^{\delta_{n-1}})}^{-1}...f_{(i)(\sigma^{\delta_{1}})}^{-1}},$$

$$W_{\delta,3} = \bigcap_{i = 1}^{s} (W_{\delta,2})^{f_{(i)(\sigma^{\delta_{n}})}^{-1}f_{(i)(\sigma^{\delta_{n-1}})}^{-1}...f_{(i)(\sigma^{\delta_{1}})}^{-1}},$$

$$\dots$$

$$W_{\delta,k} = \bigcap_{i = 1}^{s} (W_{\delta,k-1})^{f_{(i)(\sigma^{\delta_{n}})}^{-1}f_{(i)(\sigma^{\delta_{n-1}})}^{-1}...f_{(i)(\sigma^{\delta_{1}})}^{-1}}.$$
Note that $W_{\delta, \omega} = \cap_{k = 0}^{\infty} W_{\delta, k}$ is equal to $H_{\omega}$. In fact, we clearly have $H_{\omega} \leq W_{\delta, \omega}$. On the other hand, $W_{\delta, 0} = W_{0}$ and
$$W_{\delta,1} = \bigcap_{i = 1}^{s} W_{\delta, 0}^{f_{(i)(\sigma^{\delta_{n}})}^{-1}f_{(i)(\sigma^{\delta_{n-1}})}^{-1}...f_{(i)(\sigma^{\delta_{1}})}^{-1}} \leq \bigcap_{i = 1}^{s} W_{0}^{f_{(i)(\sigma^{\delta_{1}})}^{-1}} = \bigcap_{i = 1}^{s} W_{0}^{f_{i}^{-1}} = W_{1},$$

$$W_{\delta,2} = \bigcap_{i = 1}^{s} (W_{\delta,1})^{f_{(i)(\sigma^{\delta_{n}})}^{-1}f_{(i)(\sigma^{\delta_{n-1}})}^{-1}...f_{(i)(\sigma^{\delta_{1}})}^{-1}} \leq \bigcap_{i = 1}^{s} (W_{\delta,1})^{f_{(i)(\sigma^{\delta_{1}})}^{-1}} =$$
$$= \bigcap_{i = 1}^{s} (W_{\delta,1})^{f_{i}^{-1}} \leq \bigcap_{i = 1}^{s}W_{1}^{f_{i}^{-1}} = W_{2},$$

$$ \dots $$

$$W_{\delta,k} = \bigcap_{i = 1}^{s} (W_{\delta,k-1})^{f_{(i)(\sigma^{\delta_{n}})}^{-1}f_{(i)(\sigma^{\delta_{n-1}})}^{-1}...f_{(i)(\sigma^{\delta_{1}})}^{-1}} \leq \bigcap_{i = 1}^{s} (W_{\delta,k-1})^{f_{(i)(\sigma^{\delta_{1}})}^{-1}} = $$
$$= \bigcap_{i = 1}^{s} (W_{\delta,k-1})^{f_{i}^{-1}} \leq \bigcap_{i = 1}^{s} W_{k-1}^{f_{i}^{-1}} = W_{k}.$$
Thus $W_{\delta, \omega} = \cap_{k = 0}^{\infty} W_{\delta, k} \leq \bigcap_{k=0}^{\infty} W_{k} = H_{\omega}$, and the result follows.

For each positive integer $n$ and each $n$-tuple $\delta = (\delta_{1}, \dots, \delta_{n})$ with $0 \leq \delta_{1}, \dots, \delta_{n} \leq s - 1$, consider the map
$$\lambda_{n, \delta}: \Delta((H_{\omega} \setminus W_{\delta,1})^{s}) \rightarrow (H_{\omega} \setminus G)^{s}$$
$$ \, \, \, \, \, \, \, \, \, \, \, \, \, \, \, \, \, \, \, \, \, \, \, \, \, \, \, \, \, \, \, \, \, \, \, \, \, \, \, \, \, \, \, \, \, \, \, \, \, \, \, \, \, \, \, \, \, \, \, \, \, \, \, \, \, \, \, \, \, \, \, \, \, \, \, \, \, \,(H_{\omega}h)_{i=1}^{s} \mapsto (H_{\omega}h^{f_{(i)(\sigma^{\delta_{1}})}f_{(i)(\sigma^{\delta_{2}})}...f_{(i)(\sigma^{\delta_{n}})}})_{i=1}^{s}$$

\,

\noindent where $\Delta((H_{\omega} \setminus W_{\delta, 1})^{s}) = \{(H_{\omega}h)_{i = 1}^{s} \mid h \in W_{\delta, 1}\}$.

\begin{proposition} \label{3.3}
For any positive integer $n$  and any $n$-tuple $\delta$, the map $\lambda_{n, \delta}$ is injective.
\end{proposition}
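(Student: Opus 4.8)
The plan is to reduce the statement to the identity $H_\omega = W_{\delta,\omega} = \bigcap_{k\ge 0} W_{\delta,k}$ proved just above. For each $i\in\{1,\dots,s\}$ let $\Phi_i$ denote the (partial) map $h\mapsto h^{f_{(i)(\sigma^{\delta_1})}f_{(i)(\sigma^{\delta_2})}\cdots f_{(i)(\sigma^{\delta_n})}}$. Being a composite of virtual endomorphisms, $\Phi_i$ is a homomorphism defined on a subgroup of $G$; unwinding the iterated preimage superscripts one sees that $W_{\delta,1}=\bigcap_{i=1}^s\operatorname{dom}\Phi_i$, while for $k\ge 1$ the recursion reads $W_{\delta,k}=\{\,h\in W_{\delta,1}\mid h^{\Phi_i}\in W_{\delta,k-1}\ \text{for all }i\,\}$. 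In particular $W_{\delta,1}$ is a subgroup of $G$ on which every $\Phi_i$ restricts to a homomorphism, and $\lambda_{n,\delta}$ sends the diagonal class $(H_\omega h)_{i=1}^s$ (with $h\in W_{\delta,1}$) to $(H_\omega h^{\Phi_i})_{i=1}^s$.

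Next I would take two elements $(H_\omega h)_{i=1}^s$ and $(H_\omega h')_{i=1}^s$ of $\Delta((H_\omega\setminus W_{\delta,1})^s)$, with $h,h'\in W_{\delta,1}$, having equal image under $\lambda_{n,\delta}$. Comparing the $i$-th coordinates gives $H_\omega h^{\Phi_i}=H_\omega h'^{\Phi_i}$, that is $h^{\Phi_i}(h'^{\Phi_i})^{-1}\in H_\omega$, for every $i$. Since $\Phi_i$ is a homomorphism on the subgroup $W_{\delta,1}$, the element $w:=h(h')^{-1}$ lies in $W_{\delta,1}$ and satisfies $w^{\Phi_i}=h^{\Phi_i}(h'^{\Phi_i})^{-1}\in H_\omega$ for all $i$. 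It then suffices to prove $w\in H_\omega$, for this is exactly the assertion $H_\omega h=H_\omega h'$.

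To obtain $w\in H_\omega=\bigcap_{k\ge 0}W_{\delta,k}$ I would show $w\in W_{\delta,k}$ by induction on $k$. The case $k=0$ is trivial since $W_{\delta,0}=G$. For $k\ge 1$, using $H_\omega=\bigcap_{j\ge 0}W_{\delta,j}\subseteq W_{\delta,k-1}$ together with $w^{\Phi_i}\in H_\omega$, we get $w^{\Phi_i}\in W_{\delta,k-1}$ for every $i$; as $w\in W_{\delta,1}$, the recursive description of $W_{\delta,k}$ recalled above yields $w\in W_{\delta,k}$. Hence $w\in\bigcap_{k\ge 0}W_{\delta,k}=W_{\delta,\omega}=H_\omega$, so $H_\omega h=H_\omega h'$ and $\lambda_{n,\delta}$ is injective.

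The only delicate part is bookkeeping: reading the iterated preimage superscript $L^{f_{(i)(\sigma^{\delta_n})}^{-1}\cdots f_{(i)(\sigma^{\delta_1})}^{-1}}$ correctly as the condition ``$h^{\Phi_i}\in L$'', and checking that $W_{\delta,1}$ is a genuine subgroup and $\Phi_i$ a homomorphism on it, so that the passage from $h^{\Phi_i}(h'^{\Phi_i})^{-1}$ to $(h(h')^{-1})^{\Phi_i}$ is legitimate. Once this notation is straightened out there is no real obstacle; the injectivity falls out immediately from the already established equality $W_{\delta,\omega}=H_\omega$ and the recursive definition of the $W_{\delta,k}$.
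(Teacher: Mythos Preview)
Your proof is correct and follows essentially the same approach as the paper. Both arguments reduce to showing that if $w\in W_{\delta,1}$ satisfies $w^{\Phi_i}\in H_\omega$ for every $i$, then $w\in H_\omega$; the paper dispatches this in one line by invoking the characterization $H_\omega=\langle K\mid K^{\Phi_i}\leq K\text{ for all }i\rangle$ (the ``largest $\Phi_i$-invariant subgroup'' formulation, equivalent to $H_\omega=W_{\delta,\omega}$), while you spell out the same conclusion via the intersection $H_\omega=\bigcap_k W_{\delta,k}$ and an explicit induction on $k$. Your extra bookkeeping on domains and the group structure of $W_{\delta,1}$ is accurate and arguably clarifies a point the paper leaves implicit.
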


\begin{proof}
If there exist $h, k \in W_{n, 1}$ such that 
$((H_{\omega}h)_{i=1}^{s})^{\lambda_{n, \delta}} = ((H_{\omega}k)_{i=1}^{s})^{\lambda_{n, \delta}}$, then $$H_{\omega} h^{f_{(i)(\sigma^{\delta_{1}})}f_{(i)(\sigma^{\delta_{2}})}...f_{(i)(\sigma^{\delta_{n}})}} = H_{\omega} k^{f_{(i)(\sigma^{\delta_{1}})}f_{(i)(\sigma^{\delta_{2}})}...f_{(i)(\sigma^{\delta_{n}})}}$$ for any $i \in \{1, \dots, s\}$. So 
$$(hk^{-1})^{f_{(i)(\sigma^{\delta_{1}})}f_{(i)(\sigma^{\delta_{2}})}...f_{(i)(\sigma^{\delta_{n}})}} \in H_{\omega}$$ 
for any $i \in \{1, \dots, s\}$ and since
$$H_{\omega} = \langle K \leq W_{n, 0} \mid K^{f_{(i)(\sigma^{\delta_{1}})}f_{(i)(\sigma^{\delta_{2}})}...f_{(i)(\sigma^{\delta_{n}})}} \leq K, i = 1, \dots, s\rangle$$
we have $H_{\omega}h = H_{\omega}k$. 
\end{proof}

\section{State-closed semidirect product of type $A^{((H_{\omega_{1}} \setminus G) \times \dots \times (H_{\omega_{s}} \setminus G))} \rtimes G^s$ with $A$ abelian}

\noindent Let $G$ be a state-closed group with respect the data $(\mathbf{m}, \mathbf{H}, \mathbf{F})$, where $\mathbf{m} = (m_{1}, \dots,
m_{s})$, with $m_{1} \geq 2, \dots, m_{s} \geq 2$. Suppose that $G$ is finite-state with respect to the data
$$((m, \dots, m), (H = \cap_{i = 1}^{s} H_{i}, \dots, H = \cap_{i = 1}^{s} H_{i}), (f_{1}, \dots, f_{s})).$$
For each $i=1,\dots,s$ define $H_{i0}=G$, $H_{ij}= (H_{i (j - 1)})^{f_i^{-1}}$ ($j>0$), and $H_{\omega_i}= \cap_{j\geq 0}H_{ij}$. With this notation, we have:

\begin{thm}\label{T3}
Let $G$ be self-similar group of
orbit-type $(m, \dots,
m)$, and let $m = [G : H]$.
\begin{enumerate}
    \item [(i)] If $B$ is a finite abelian group, then $$\mathcal{G}=B^{((H_{\omega_1} \setminus G) \times \cdots \times (H_{\omega_s} \setminus G))}\rtimes G^s$$
    is self-similar of orbit-type $(\underbrace{|B|\cdot m^s, \cdots, |B|\cdot m^s}_{s \,\, \text{times}})$; \\
    
    \item[(ii)] If $G$ is a non-torsion group, then $$\mathcal{G}=\mathbb{Z}^{((H_{\omega_1} \setminus G) \times \cdots \times (H_{\omega_s} \setminus G))}\rtimes G^s$$
    is self-similar of orbit-type $(\underbrace{m^s, \dots , m^s}_{s\,\, \text{times}}, 1)$.
\end{enumerate}
Moreover, if $G$ is finite-state, then $\mathcal{G}$ is also.

\end{thm}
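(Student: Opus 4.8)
The plan is to exhibit an explicit self-similar $\mathcal{G}$-data realizing the claimed orbit-type, and then to verify (a) that it is simple (its $\tilde{\mathbf{F}}$-core is trivial, hence $\mathcal{G}$ is self-similar), and (b) that it is finite-state when $G$ is. Parts (i) and (ii) share the same skeleton; the only difference is that (ii) needs one extra virtual endomorphism of index $1$ — an actual endomorphism of $\mathcal{G}$ — to absorb the infinite exponents of $\mathbb{Z}$, exactly as the map $y\mapsto x,\ x\mapsto 1$ does in Example~\ref{ex3.2}.

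Write $X=(H_{\omega_1}\setminus G)\times\cdots\times(H_{\omega_s}\setminus G)$ with base point $x_0=(H_{\omega_1},\dots,H_{\omega_s})$, so $\mathcal{G}=A^X\rtimes G^s$ with $G^s$ acting coordinatewise by right translation; then $X$ is a single $G^s$-orbit with $\mathrm{Stab}_{G^s}(x_0)=H_{\omega_1}\times\cdots\times H_{\omega_s}$, the module $A^X$ is generated over $G^s$ by the copy of $A$ at $x_0$, and so $\mathcal{G}$ is finitely generated. By the preceding Lemma on the $G^s$-data $(\mathbf{m_1},\mathbf{H_1},\mathbf{F_1})$ (together with Lemma~\ref{2.2}), $G^s$ is finite-state self-similar of orbit-type $(m^s,\dots,m^s)$ via virtual endomorphisms $\rho_1,\dots,\rho_s$ with domains $H^s$ and transversals $T^{(1)},\dots,T^{(s)}$ of size $m^s$; the twist of $\rho_i$ by the cyclic shift $\sigma$ is precisely what matches the coordinatewise structure of $X$. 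For part (i) I would take $\tilde H_i=B_0^{X}\rtimes L_i$, where $B_0^{X}=\{\xi\in B^X\mid\sum_x\xi(x)=0\}$ is the (index $|B|$, $G^s$-invariant) augmentation submodule and $L_i\le G^s$ is the index-$m^s$ subgroup underlying $\rho_i$, and define $\tilde f_i\colon\tilde H_i\to\mathcal{G}$ to act by $\rho_i$ on the $G^s$-part and, on the module part, by the cosetwise contraction $H_{\omega_j}h\mapsto H_{\omega_j}h^{f_{(j)(\sigma^{\cdot})}}$ (sending to $0$ the copies at cosets that do not survive, i.e.\ those outside $W_{\cdot,1}$) — this is exactly the map whose injectivity is recorded in Proposition~\ref{3.3}. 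For part (ii) one takes instead $\tilde H_i=A^X\rtimes L_i$ (index $m^s$) with $\tilde f_i$ defined in the same way, and adjoins the index-$1$ datum $\tilde\tau\colon\mathcal{G}\to\mathcal{G}$, $\xi g\mapsto z^{\sum_x\xi(x)}$, where $z$ is a fixed element of infinite order of $G$ placed in the first coordinate of $G^s$ (available since $G$ is non-torsion); $\tilde\tau$ is the composite $\mathcal{G}\twoheadrightarrow A^X/[A^X,G^s]\cong\mathbb{Z}\hookrightarrow G^s\le\mathcal{G}$ and hence a homomorphism, generalizing the map of Example~\ref{ex3.2}.

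Granting that the $\tilde f_i$ are well-defined homomorphisms (a routine check that the module contraction is equivariant for $L_i$-conjugation versus $\rho_i$-conjugation, using $H_{\omega_j}^{f_j}\le H_{\omega_j}$), the heart of the proof is showing the $\tilde{\mathbf F}$-core $N\vartriangleleft\mathcal{G}$ is trivial. Suppose $N\ne 1$. If $N\cap A^X=1$ then $[N,A^X]\le N\cap A^X=1$, so $N$ centralizes $A^X$ and projects injectively to a normal subgroup $\bar N$ of $G^s$ which, by the inclusion and $\tilde f_i$-invariance hypotheses on $N$, lies in every $L_i$ and is $\rho_i$-invariant; the self-similarity of $G^s$ from the Lemma on the $(\mathbf{m_1},\mathbf{H_1},\mathbf{F_1})$-data — which itself rests on the simplicity of the original $G$-data — forces $\bar N=1$, whence $N=1$. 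If instead $N\cap A^X\ne 1$, pick $0\ne\eta\in N\cap A^X$; the $\tilde f_i$-invariance of $N$ together with $N\le\tilde H_i$ forces every iterated image $\eta^{\tilde f_{i_1}\cdots\tilde f_{i_k}}$ to stay inside $B_0^X$ in part (i), but by the explicit formula these images are iterated contractions of $\operatorname{supp}(\eta)$ along the coset spaces $H_{\omega_j}\setminus G$, and since $\bigcap_k W_{\delta,k}=H_\omega$ and the contractions are injective on cosets (Proposition~\ref{3.3}), for $\eta\ne 0$ some iterate escapes $B_0^X$ — a contradiction; in part (ii) one applies $\tilde\tau$ to a suitable iterate to obtain a nontrivial element of $G^s$ lying in $N$, and finishes as in the first case. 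Either way simplicity of the original data is contradicted, so $N=1$ and $\mathcal{G}$ is self-similar.

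Finally, the orbit-type is read off from $[\mathcal{G}:\tilde H_i]=|B|\cdot m^s$ in part (i) and $[\mathcal{G}:\tilde H_i]=m^s$ together with the extra index-$1$ datum in part (ii), giving $(|B|m^s,\dots,|B|m^s)$ and $(m^s,\dots,m^s,1)$ respectively. For the finite-state assertion, each $\tilde f_i$ is assembled from $\rho_i$ (finite-state because $G$, hence $G^s$, is) and the module contraction, which carries a finitely supported function to finitely many finitely supported functions whose supports lie among finitely many "nearby" coset tuples; since $\tilde\tau$ has image inside $\langle z\rangle$ (finite-state in the given representation of $G$), the same bookkeeping of state-sets as in the proof of Lemma~\ref{2.2} yields, for each $\gamma\in\mathcal{G}$, a finite set $U_0\ni\gamma$ with $(U_0^{LT})^{\dot\varphi}\subseteq U_0$. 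The step I expect to be the main obstacle is the second case of the core computation — making rigorous that iterated contraction of a nonzero module element must eventually violate the defining constraint of the domains $\tilde H_i$ — since this is precisely where the parabolic-subgroup analysis of Section~3, namely Proposition~\ref{3.3} and the identity $W_{\delta,\omega}=H_\omega$, has to be used in full.
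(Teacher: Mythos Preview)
Your overall architecture matches the paper's: the same domain subgroups (your $B_0^X$ is exactly the paper's $\langle[B^X,G^s]\rangle$), the same contraction maps $\tilde f_i$, and the same extra index-$1$ endomorphism in part~(ii). The gap is in the core computation, precisely at the point you flag as the main obstacle.

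Your claim that ``for $\eta\ne 0$ some iterate escapes $B_0^X$'' is not correct as stated, and this is not how the argument runs. Iterates of $\eta$ under the $\tilde f_i$ can perfectly well become $0$ and stay there (for instance if the support points not lying over $H^s$ happen to carry coefficients summing to zero), so no escape is forced. Two ingredients you do not mention are essential: \emph{normality} of $N$ in $\mathcal G$, used to translate one support point of $\eta$ to the base point $x_0$ (which is fixed by every contraction), and \emph{minimality} of the support size $r=|\operatorname{supp}\eta|$ over $N\setminus\{1\}$. With these in hand the argument is: since the $x_0$-term survives every $\tilde f_i$ with its original nonzero value, each iterate is nonzero; minimality of $r$ then forces the support size to stay equal to $r$, so no support point is killed and (by the injectivity of Proposition~\ref{3.3}, which you rightly invoke) no two collide; iterating, every support point is forever contractible, hence represents the coset tuple $x_0$ --- contradicting $r>1$. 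In part~(ii) the extra endomorphism is used at the \emph{start}, not the end: from $N^{\tilde\tau}\le N\le A^X$ one gets $z^{\operatorname{aug}(\eta)}\in A^X\cap G^s=1$, so every $\eta\in N$ already has augmentation $0$, hence $r>1$, and the same minimality argument applies. Your proposed route for (ii) --- iterate first, then apply $\tilde\tau$ to land nontrivially in $G^s$ --- cannot work, since the augmentation-zero constraint propagates to all iterates.
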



\begin{proof}
Let $H$ be the subgroup $\bigcap_{i = 1}^{s} H_{i}$ of $G$. Consider the map  defined on the $s$-tuples of cosets
\begin{eqnarray*}
\lambda&:&  \prod_{i=1}^{s}\left(H_{\omega_i}\setminus \bigcap_{i = 1}^{s} H_{i}\right) \rightarrow \prod_{i=1}^{s}(H_{\omega_i}\setminus G)
\end{eqnarray*}%

\begin{equation*}
\, \, \, \, \, \, \, \, \, \, \, \, \, \, \, \, \, \, \, \, \, \, \, \, \, \, \, \, \, \,(H_{\omega_1}h_1,\dots,H_{\omega_s}h_s)\mapsto  (H_{\omega_1}h_1^{f_{1}},\dots,H_{\omega_s}h_s^{f_{s}}).
\end{equation*}%
If $(H_{\omega_i}h_i)^{\lambda}=(H_{\omega_i}h_i')^{\lambda}$, then $(h_i(h_i')^{-1})^{f_{i}}\in  H_{\omega_{i}}$ for each $1\leq i \leq s$; therefore $\lambda$ is injective.

To prove $(i)$ let $\mathcal{H}$ be the subgroup
$$\langle [B^{((H_{\omega_1} \setminus G) \times \cdots \times (H_{\omega_s} \setminus G))}, G^{s}] \rangle \rtimes H^s.$$
Note that the index $[\mathcal{G}:\mathcal{H}]$ is $|B|\cdot m^s$. 

Let $\sigma = (1 \, 2 \, \cdots s) \in Sym(\{1, 2, \dots, s\})$. For each $i \in \{1, 2, \dots, s\}$ define the homomorphism $\rho_{(1)\sigma^i}: \mathcal{H} \rightarrow \mathcal{G}$ that extend the map

$$\rho_{(1)\sigma^i}: [b^{(H_{\omega_1}, \cdots ,H_{\omega_s})}, (g_{1}, \cdots, g_{s})] = b^{- (H_{\omega_1}, \cdots ,H_{\omega_s}) + (H_{\omega_1} g_{1}, \cdots ,H_{\omega_s} g_{s})} \mapsto
$$
$$\mapsto \begin{cases}
       b^{- (H_{\omega_1}, \cdots ,H_{\omega_s}) + (H_{\omega_1} g_{1}^{f_{(1)\sigma^i}}, \cdots ,H_{\omega_s} g_{s}^{f_{(s)\sigma^i}})}, \textrm{ if } (g_1,\dots,g_s)\in H^s\\
       b^{- (H_{\omega_1}, \cdots ,H_{\omega_s})},  \textrm{ if } (g_1,\dots,g_s)\notin H^s
    \end{cases}$$
$$(h_1,\dots,h_s) \mapsto (h_1^{f_{(1)\sigma^i}},\dots, h_s^{f_{(s)\sigma^i}}), \, (h_1,\dots,h_s) \in H^s$$

We claim that the state-closed representation of $\mathcal{G}$ defined by the data $((|B|\cdot m^s, \dots, |B|\cdot m^s), (\mathcal{H}, \dots, \mathcal{H}), \{\rho_{1}, \dots, \rho_{s} \})$ is faithful.

Consider $K \leq \mathcal{H}$ with $K \vartriangleleft \mathcal{G}$ and $K^{\rho_{i}} \leq K$, $i = 1, 2, \dots, s$. Firstly, since $G^s$ is state-closed with relation to the tuple 
$$((m^s, \dots, m^s), (H, \dots, H), \{\rho_{1}|_{H}, \dots, \rho_{s}|_{H} \}),$$ 
then $K \leq \langle [B^{((H_{\omega_1} \setminus G) \times \cdots \times (H_{\omega_s} \setminus G))}, G^{s}] \rangle$. Let
$$y=b^{n_1(H_{\omega_1}h_{11},\dots,H_{\omega_s}h_{1s}) + n_2(H_{\omega_1}h_{21},\dots,H_{\omega_s}h_{2s}) +\dots + n_r(H_{\omega_1} h_{r1},\dots,H_{\omega_s} h_{rs})}$$
be an element of $K \setminus 1$ with $|y| = r > 1$ minimal. By normality of $K$, we can assume that $h_{11} = \cdots = h_{1s} = 1$. Since $\lambda$ is injective and $r$ is minimal, it follows that $y^{\rho_{1}^t} \neq 1$, and 
$$|y^{\rho_{1}^{n}}| = |b^{n_1(H_\omega,\dots,H_\omega)+ n_2(H_\omega k_{2, 1},\dots,H_\omega k_{2, s}) + \dots + n_r(H_\omega k_{r, 1},\dots,H_\omega k_{r,s})}| = r$$
for any $n = 0, 1, 2, \dots$. Then 
$$(h_{2 1}, \dots, h_{2 s}), \dots, (h_{r 1}, \dots, h_{r s}) \in H_{\omega_{1}} \times \cdots \times H_{\omega_{s}};$$
a contradiction.

To prove $(ii)$ let $\mathcal{H}$ be the subgroup $\mathbb{Z}^{((H_{\omega_1} \setminus G) \times \cdots \times (H_{\omega_s} \setminus G))} \rtimes H^s.$ The index $[\mathcal{G}:\mathcal{H}]$ is $m^s$. 

Let $\sigma = (1 \, 2 \, \cdots s) \in Sym(\{1, 2, \dots, s\})$. For each $i \in \{1, 2, \dots, s\}$ define the homomorphism $\rho_{(1)\sigma^i}: \mathcal{H} \rightarrow \mathcal{G}$ that extend the map

$$\rho_{(1)\sigma^i}: b^{(H_{\omega_1} g_{1}, \cdots ,H_{\omega_s} g_{s})} \mapsto
\begin{cases}
e,  \textrm{ if } (g_1,\dots,g_s)\notin H^s \\
b^{(H_{\omega_1} g_{1}^{f_{(1)\sigma^i}}, \cdots ,H_{\omega_s} g_{s}^{f_{(s)\sigma^i}})}, \textrm{ otherwise.}
    \end{cases}$$
$$(h_1,\dots,h_s) \mapsto (h_1^{f_{(1)\sigma^i}},\dots, h_s^{f_{(s)\sigma^i}}), \, (h_1,\dots,h_s) \in H^s$$
and the homomorphism $\mu: \mathcal{G} \rightarrow \mathcal{G}$ that extend the map
\begin{align*}
    \mu: b^{(H_\omega,\dots,H_\omega)}&\mapsto (x,\dots,x), \textrm{ where } o(x)=\infty \textrm{ and } x\in G\\
    (g_1,\dots,g_s)&\mapsto e, (g_1,\dots,g_s) \in G^s.
\end{align*}

We claim that the state-closed representation of $\mathcal{G}$ defined by the data $((m^s, \dots, m^s, 1), (\mathcal{H}, \dots, \mathcal{H}, \mathcal{G}), \{\rho_{1}, \dots, \rho_{s}, \mu\})$ is faithful.

Consider $K \leq \mathcal{H}$ with $K \vartriangleleft \mathcal{G}$, $K^{\rho_{i}} \leq K$, $i = 1, 2, \dots, s$, and $K^{\mu} \leq K$. Firstly, since $G^s$ is state-closed with respect to the tuple 
$$((m^s, \dots, m^s), (H, \dots, H), \{\rho_{1}|_{H}, \dots, \rho_{s}|_{H} \})$$ 
then $K \leq \mathbb{Z}^{((H_{\omega_1} \setminus G) \times \cdots \times (H_{\omega_s} \setminus G))}$. Let
$$y=b^{n_1(H_{\omega_1}h_{11},\dots,H_{\omega_s}h_{1s}) + n_2(H_{\omega_1}h_{21},\dots,H_{\omega_s}h_{2s}) +\dots + n_r(H_{\omega_1} h_{r1},\dots,H_{\omega_s} h_{rs})}$$
be an element of $K \setminus \{1\}$. By definition of $\mu$, we have $n_{1}+n_{2}+ \cdots + n_{r} = 0$. We can consider $r > 1$ minimal. Due to the normality of $K$, we can assume that $h_{11} = \cdots = h_{1s} = 1$. Since $\lambda$ is injective and $r$ is minimal, it follows that $y^{\rho_{1}^t} \neq 1$ and 
$$|y^{\rho_{1}^{n}}| = |b^{n_1(H_\omega,\dots,H_\omega)+ n_2(H_\omega k_{2, 1},\dots,H_\omega k_{2, s}) + \dots + n_r(H_\omega k_{r, 1},\dots,H_\omega k_{r,s})}| = r$$
for any $n = 0, 1, 2, \dots$. Then 
$$(h_{2 1}, \dots, h_{2s}), \dots, (h_{r 1}, \dots, h_{r s}) \in H_{\omega_{1}} \times \cdots \times H_{\omega_{s}};$$
a contradiction.

Now, let us prove that $\mathcal{G}$ is finite-state.  First, consider the group 
$$\mathcal{G}=B^{((H_{\omega_1} \setminus G) \times \cdots \times (H_{\omega_s} \setminus G))}\rtimes G^s.$$
Since $G$ is finite-state, $G^s$ is also finite-state. Let $G^s$ be finite-state with the choice of transversals $T_1,\dots, T_s$ of $H_1,\dots, H_s$ in $G$, respectively, where $t_{0i}=e$ is a representative of $H_i$,  for $i=1,\dots, s$, and define
$$\dot{B}=\{b^{(H_{\omega_1},\dots,H_{\omega_s})}\mid b\in {B}\}\simeq_{\phi} {B}.$$
For each $b\in B$, let $\dot{b}$ be the element of $\dot{B}$ satisfying $(\dot{b})^\phi=b$. Let $\dot{B}\times \prod_{i=1}^s T_{ij}$ be a transversal of $\mathcal{H}_{j} = \mathcal{H}$ in $\mathcal{G}$, where $j = 1, \dots, s$. Since $\mathcal{G}$ is generated by $\dot{B}\cup G^s$, it suffices to show that these elements are finite-state.

Consider the representation $\varphi$ of $G^s$ given by $g^\varphi=(t_j \mapsto g_{t_j})_{j=1}^s \cdot \pi$, where $t_j \in \prod_{i=1}^s T_{ij}$ and $\pi \in Sym(\dot{\bigcup}_{j=1}^{s} \prod_{i=1}^s T_{ij})$. If $\dot{x}\in \dot{B}$, we have
$$\dot{x}^{\dot{\varphi}}= \left( (\dot{b},t_{j})\mapsto \left\{\begin{array}{lr}
        e, & \text{ if } t_{j}=(e,\dots,e)\\
        \dot{x}, & \text{ otherwise}
        \end{array}\right\}\right)_{\dot{b} \in \dot{B}, \, j=1, \dots, s}((\dot{b},t_{j})\mapsto (\dot{xb}, t_{j})),$$
and if $g\in G^s$, then
$$g^{\dot{\varphi}}=((\dot{b},t)\mapsto \dot{b}g_{t_{j}}\dot{b}^{-1})_{\dot{b} \in \dot{B}, \, j=1, ..., s}((\dot{b},t_{j})\mapsto (\dot{b}, t_{j}^{\pi})).$$
In the first case, the set of  states of $\dot{b}^{\dot{\varphi}}$ is $\{e,\dot{b}\}$. In the second case, the set of states of $g^{\dot{\varphi}}$ is $\{ \dot{b}g_{t_j}\dot{b}^{-1} \mid t_j \in \prod_{i=1}^s T_{ij} \}$, which is contained in a finite set $U^{B}$, where $(U^{B})^{\varphi} \subset U^{B}$. Then $\mathcal{G}$ is finite-state.

The proof that 
$$\mathcal{G}={\mathbb{Z}}^{((H_{\omega_1} \setminus G) \times \cdots \times (H_{\omega_s} \setminus G))}\rtimes G^s$$
is finite-state is similar.  Now, it is necessary to replace $\dot{B}$ by $\dot{Z}$ defined by
$$\dot{Z}=\{a^{(H_{\omega_1},\dots,H_{\omega_s})}\mid a\in {\mathbb{Z}}\}\simeq_{\phi} {\mathbb{Z}}.$$
Here, we denote $\dot{a}$ as the element of $\dot{Z}$ such that $(\dot{a})^{\phi} = a$. Let $\prod_{i=1}^s T_{ij}$ be a transversal of $\mathcal{H}_{j} = \mathcal{H}$ in $\mathcal{G}$, where $j = 1, \dots , s$, and let $\{(e, \dots, e)\}$ be a transversal of $\mathcal{G}$ in $\mathcal{G}$. Since $\mathcal{G}$ is generated by $\dot{Z}\cup G^s$, it suffices to show that these elements are finite-state.

Consider the representation $\varphi$ of $G^s$ given by $g^\varphi=(t_{j}\mapsto g_{t_{j}})_{j = 1}^{s}\cdot \pi$, where $t_j \in \prod_{i=1}^s T_{ij}$ and $\pi\in Sym(\dot{\bigcup}_{j=1}^{s} \prod_{i=1}^s T_{ij})$. If $g\in G^s$, then
$$g^{\dot{\varphi}} = ((t_j \mapsto g_{t_j})_{j=1}^s, (e, \dots, e) \mapsto (e, \dots, e)) \cdot \pi (s\cdot m^s + 1).$$
If $\dot{a} \in \dot{Z}$, we have
$$\dot{a}^{\dot{\varphi}}= \left(\left(t_j \mapsto \left\{\begin{array}{lr}
        \dot{a}, & \text{ if } t_j =(e,\dots,e)\\
        (e, \dots, e), & \text{ otherwise}
        \end{array}\right\} \right)_{j=1}^s, (e, \dots, e) \mapsto {\bf x} \right),$$
where $a^{\mu} = (x, \dots, x) = {\bf x}$. In the first case, the set of states of $g^{\dot{\varphi}}$ is $\{(e, \dots, e), g_{t_j} \mid t_j \in \prod_{i=1}^s T_{ij} \}$, which is contained in a finite set $U$, where $U^{\varphi} \subset U$. In the second case, the set of  states of $\dot{a}^{\dot{\varphi}}$ is $\{e,\dot{a}\} \cup Q(\dot{x})$, which is finite. Then $\mathcal{G}$ is finite-state. 
\end{proof}

\begin{corollary} \label{cor4.2}
Let $G$ be a non-torsion self-similar group of orbit-type $(m_{1}, \dots, m_{s})$, where $m = [G : \cap_{i=1}^{s}H_{i}]$. Then, given any integer $l\geq 1$, the group
 $$(\mathbb{Z}^l)^{((H_{\omega_1} \setminus G) \times \cdots \times (H_{\omega_s} \setminus G))}\rtimes G^s$$ 
is self-similar of orbit-type $(\underbrace{m^s, \dots, m^s}_{sl \, \text{times}}, \underbrace{1,\dots,1}_{l \,\, \text{times}})$.

\end{corollary}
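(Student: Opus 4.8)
The plan is to reduce Corollary \ref{cor4.2} to Theorem \ref{T3}(ii) by two successive applications of earlier machinery. First I would invoke Theorem A (Theorem \hyperref[T2.5]{A}) to replace the simple $G$-data $((m_1,\dots,m_s),(H_1,\dots,H_s),(f_1,\dots,f_s))$ by a simple $G$-data of the form $((\underbrace{r,\dots,r}_{N}),(\underbrace{H_G,\dots,H_G}_{N}),(g_1,\dots,g_N))$ with trivial parabolic subgroup, where $H_G$ is the core of $\cap_{i=1}^s H_i$ in $G$ and $r=[G:H_G]$; this step also preserves the finite-state property, though here only self-similarity is needed. In particular, in this new data all the $H_{\omega_i}$ are trivial (the parabolic subgroup is trivial), so that each coset space $H_{\omega_i}\setminus G$ is simply $G$, and the number of copies of $G$ in the semidirect-product exponent matches the length of the new data. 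Since $G$ is non-torsion, this puts us exactly in the setting of Theorem \ref{T3}(ii) with $s$ replaced by $N$ and the common index denoted by the same letter $m=r$.

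Next, to handle the exponent $\mathbb{Z}^l$ rather than $\mathbb{Z}$, I would iterate the concatenation construction (Proposition \ref{P3.2}) $l$ times, or equivalently observe directly that
$$(\mathbb{Z}^l)^{((H_{\omega_1}\setminus G)\times\cdots\times(H_{\omega_s}\setminus G))}\rtimes G^s$$
is built from $l$ copies of the module $\mathbb{Z}^{((H_{\omega_1}\setminus G)\times\cdots\times(H_{\omega_s}\setminus G))}$ over the common acting group $G^s$, so that writing $A_1=\mathbb{Z}^{((H_{\omega_1}\setminus G)\times\cdots)}$, $A_2=(\mathbb{Z}^{l-1})^{((H_{\omega_1}\setminus G)\times\cdots)}$, $U=G^s$ we have $G_1=A_1\rtimes U$ and $G_2=A_2\rtimes U$ with $G=(A_1\oplus A_2)\rtimes U$. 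Theorem \ref{T3}(ii) supplies a state-closed faithful representation of $G_1$ (of orbit-type $(m^s,\dots,m^s,1)$ with $s$ copies of $m^s$ and one trailing $1$), and by induction on $l$ a state-closed faithful representation of $G_2$ (of orbit-type with $s(l-1)$ copies of $m^s$ and $l-1$ trailing $1$'s); Proposition \ref{P3.2} then concatenates these into a faithful state-closed representation of $G$. The concatenated orbit-type vector is the concatenation $(m^s,\dots,m^s,1\mid m^s,\dots,m^s,1,\dots)$; after reordering — which is permitted since the order of the entries of $\mathbf{m}$ only permutes the branches of the tree — this is $(\underbrace{m^s,\dots,m^s}_{sl},\underbrace{1,\dots,1}_{l})$, as claimed.

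One point requiring care is the bookkeeping of the index $m$ and the orbit-type across the replacement coming from Theorem A. In the new data each $H_{\omega_i}$ is trivial so the module $\mathbb{Z}^{((H_{\omega_1}\setminus G)\times\cdots)}$ becomes $\mathbb{Z}^{(G^N)}$; one must check that this is still the correct module for the original statement, i.e. that the group $(\mathbb{Z}^l)^{((H_{\omega_1}\setminus G)\times\cdots\times(H_{\omega_s}\setminus G))}\rtimes G^s$ with the \emph{original} $H_{\omega_i}$ and the \emph{original} $s$ is abstractly isomorphic to the one obtained after the reduction — or, more simply, to note that Corollary \ref{cor4.2} is a statement about \emph{some} self-similar representation, so it suffices to exhibit one, and the reduction is harmless. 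The main obstacle, then, is not any single hard lemma but rather organizing the two-layer induction (first strip the parabolic subgroup via Theorem A, then iterate concatenation in $l$) and verifying that the orbit-type vectors combine and reorder to exactly $(\underbrace{m^s,\dots,m^s}_{sl},\underbrace{1,\dots,1}_{l})$, with the convention that $m$ now refers to the index $[G:\cap_{i=1}^s H_i]$ in the original data.

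\begin{proof}
By Theorem \hyperref[T2.5]{A}, we may pass to a simple $G$-data of orbit-type $(\underbrace{r,\dots,r}_{N})$ with all $H_{\omega_i}=1$, so that $H_{\omega_i}\setminus G=G$ for each $i$. Since $G$ is non-torsion, Theorem \ref{T3}(ii) gives a faithful state-closed representation of
$$\mathcal{G}_1=\mathbb{Z}^{((H_{\omega_1}\setminus G)\times\cdots\times(H_{\omega_s}\setminus G))}\rtimes G^s$$
of orbit-type $(\underbrace{m^s,\dots,m^s}_{s},1)$, where $m=[G:\cap_{i=1}^s H_i]$. Arguing by induction on $l$ and writing
$$\mathcal{G}_2=(\mathbb{Z}^{l-1})^{((H_{\omega_1}\setminus G)\times\cdots\times(H_{\omega_s}\setminus G))}\rtimes G^s,$$
we may assume $\mathcal{G}_2$ has a faithful state-closed representation of orbit-type $(\underbrace{m^s,\dots,m^s}_{s(l-1)},\underbrace{1,\dots,1}_{l-1})$. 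Since
$$(\mathbb{Z}^l)^{((H_{\omega_1}\setminus G)\times\cdots\times(H_{\omega_s}\setminus G))}\rtimes G^s \;=\; \bigl(A_1\oplus A_2\bigr)\rtimes G^s$$
with $A_1=\mathbb{Z}^{((H_{\omega_1}\setminus G)\times\cdots\times(H_{\omega_s}\setminus G))}$, $A_2=(\mathbb{Z}^{l-1})^{((H_{\omega_1}\setminus G)\times\cdots\times(H_{\omega_s}\setminus G))}$ and common acting group $U=G^s$, Proposition \ref{P3.2} yields a faithful state-closed representation of this group whose orbit-type is the concatenation of those of $\mathcal{G}_1$ and $\mathcal{G}_2$. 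Reordering the entries of the orbit-type vector (which merely permutes the branches at the root), this representation has orbit-type $(\underbrace{m^s,\dots,m^s}_{sl},\underbrace{1,\dots,1}_{l})$, as required.
\end{proof}
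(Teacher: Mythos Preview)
Your core argument --- Theorem~\ref{T3}(ii) for the base case $l=1$, then induction on $l$ via the concatenation of Proposition~\ref{P3.2} --- is exactly the paper's proof. That part is fine.

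The problem is your first step, the appeal to Theorem~A. It is not merely unnecessary; taken literally it breaks the proof. Theorem~A replaces the original data $((m_1,\dots,m_s),(H_1,\dots,H_s),(f_1,\dots,f_s))$ by a new data with $N=sr$ virtual endomorphisms (where $r=[G:H_G]$) and trivial parabolic subgroup. Under the new data the group in question becomes $(\mathbb{Z}^l)^{(G^N)}\rtimes G^N$, which is in general \emph{not} isomorphic to the group $(\mathbb{Z}^l)^{((H_{\omega_1}\setminus G)\times\cdots\times(H_{\omega_s}\setminus G))}\rtimes G^s$ appearing in the corollary (different number of factors in the acting group, different coset spaces in the exponent). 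The orbit-type produced would likewise involve $r^N$ and $N$, not $m^s$ and $s$. Your remark that ``it suffices to exhibit one'' self-similar representation does not rescue this: the corollary specifies both the group and the orbit-type, and your reduction changes both. In the formal proof you then silently revert to the original $s$, $m$, and $H_{\omega_i}$ when invoking Theorem~\ref{T3}(ii), which is inconsistent with having just applied Theorem~A.

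The fix is simply to delete the first sentence. Theorem~\ref{T3}(ii) already applies directly to the data $((m,\dots,m),(H,\dots,H),(f_1,\dots,f_s))$ with $H=\cap_{i=1}^s H_i$ (the passage from the general $(m_1,\dots,m_s)$ to uniform $(m,\dots,m)$ is handled by Lemma~\ref{2.2} and is built into the setup preceding Theorem~\ref{T3}); no trivialization of the parabolic subgroup is needed, and the $H_{\omega_i}$ in the conclusion are precisely those in the statement of the corollary. With that deletion your proof coincides with the paper's.
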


\begin{proof}

 By applying Theorem \ref{T3}, which establishes that $$(\mathbb{Z})^{((H_{\omega_1} \setminus G) \times \cdots \times (H_{\omega_s} \setminus G))}\rtimes G^s$$ is a self-similar group of orbit-type $(m^s, \dots , m^s, 1)$, and utilizing the process of concatenation as described in Proposition \ref{P3.2}, we deduce that $$(\mathbb{Z}^2)^{((H_{\omega_1} \setminus G) \times \cdots \times (H_{\omega_s} \setminus G))}\rtimes G^s$$ is a self-similar group of orbit-type $(m^s, \dots, m^s, 1, 1)$. 
\noindent We apply the same argument recursively $l-1$ times to derive the desired result.


\end{proof}

Theorem B follows from Theorem \ref{T3}, Corollary \ref{cor4.2} and the process of concatenation (Proposition \ref{P3.2}). 


\begin{corollary}\label{C1}
Let $A$ be a finitely generated abelian group. Then the group $A \wr (C_{2} \wr \mathbb{Z})$ is finite-state. In particular, the groups $C_{2} \wr (C_{2} \wr \mathbb{Z})$ and $\mathbb{Z} \wr (C_{2} \wr \mathbb{Z})$ are finite-state.
\end{corollary}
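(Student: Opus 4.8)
The plan is to obtain the corollary from Theorem~B --- equivalently, from Theorem~\ref{T3}, Corollary~\ref{cor4.2} and the concatenation Proposition~\ref{P3.2} --- applied to $L:=C_{2}\wr\mathbb{Z}$, once a convenient self-similar representation of $L$ is in hand. The group $L=C_{2}\wr\mathbb{Z}$ is non-torsion, as it contains $\mathbb{Z}$, and it has a faithful self-similar finite-state representation; indeed the lamplighter groups $\mathbb{Z}^{r}\wr\mathbb{Z}^{s}$, and in particular $C_{2}\wr\mathbb{Z}$, are known to be finite-state \cite{BruSid3}. Starting from a simple finite-state $L$-data realising $L$, I would apply Theorem~A: this produces a simple finite-state $L$-data $(\mathbf{l},\mathbf{K},\mathbf{E})$, of orbit-type $(r,\dots,r)$, with trivial parabolic subgroup, still realising $L$ as finite-state self-similar. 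Write $s$ for the number of components of $\mathbf{l}$, and let $H_{\omega_{1}},\dots,H_{\omega_{s}}$ be the one-variable parabolic subgroups attached to $(\mathbf{l},\mathbf{K},\mathbf{E})$ as in the paragraph preceding Theorem~\ref{T3}.

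Write the given finitely generated abelian group as $A\cong\mathbb{Z}^{\ell}\oplus B$ with $B$ finite abelian, and set $X=(H_{\omega_{1}}\setminus L)\times\cdots\times(H_{\omega_{s}}\setminus L)$. Applying Theorem~\ref{T3}(i) to $L$ (with the data $(\mathbf{l},\mathbf{K},\mathbf{E})$) shows that $B^{(X)}\rtimes L^{s}$ is finite-state self-similar; Theorem~\ref{T3}(ii) together with Corollary~\ref{cor4.2} shows that $(\mathbb{Z}^{\ell})^{(X)}\rtimes L^{s}$ is finite-state self-similar; and Proposition~\ref{P3.2} then gives that
$$\mathcal{G}=A^{(X)}\rtimes L^{s}$$
is finite-state self-similar. (This is precisely Theorem~B for $G=L$.)

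It remains to place $A\wr L$ inside $\mathcal{G}$. Restrict the coordinatewise $L^{s}$-action on $X$ to the diagonal subgroup $\Delta=\{(g,\dots,g):g\in L\}\cong L$. The $\Delta$-stabiliser of a point $(H_{\omega_{1}}g_{1},\dots,H_{\omega_{s}}g_{s})$ of $X$ is $\bigcap_{i=1}^{s}g_{i}^{-1}H_{\omega_{i}}g_{i}$; since $(\mathbf{l},\mathbf{K},\mathbf{E})$ has trivial parabolic subgroup, the $g_{i}$ may be chosen so that this intersection is trivial, i.e. $X$ carries a regular $\Delta$-orbit. The coordinates of $A^{(X)}$ indexed by that orbit form an $L$-submodule isomorphic to $A^{(L)}$, and together with $\Delta$ they generate a subgroup isomorphic to $A^{(L)}\rtimes L=A\wr L$. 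Since $\mathcal{G}$ is finite-state and finite-stateness is inherited by subgroups, $A\wr(C_{2}\wr\mathbb{Z})$ is finite-state; taking $A=C_{2}$ and $A=\mathbb{Z}$ yields the two groups named in the statement.

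The delicate point is the existence of a regular $\Delta$-orbit in $X$: one has to show that triviality of the parabolic subgroup $H_{\omega}$ of $(\mathbf{l},\mathbf{K},\mathbf{E})$ actually forces some conjugate intersection $\bigcap_{i}g_{i}^{-1}H_{\omega_{i}}g_{i}$ to vanish, i.e. that eliminating the algebraic parabolic also eliminates, up to conjugacy, the joint geometric parabolic $\bigcap_{i}H_{\omega_{i}}$. The remaining ingredients --- the realisation of $L$, the internal bookkeeping of Theorems~A and~\ref{T3}, and the concatenation argument for a general finitely generated abelian $A$ --- are routine given the results established above.
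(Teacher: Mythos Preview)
Your overall architecture is sound, and you have correctly isolated the one genuine obstruction: the existence of a regular $\Delta$-orbit in $X$, equivalently the vanishing of some conjugate intersection $\bigcap_{i} g_i^{-1}H_{\omega_i}g_i$. You acknowledge this as the ``delicate point'' but do not actually supply an argument for it, and there is no general result in the paper guaranteeing that triviality of the joint parabolic $H_\omega$ forces such an intersection to vanish. Indeed $H_\omega$ is the largest subgroup simultaneously invariant under \emph{all} $f_i$, whereas each $H_{\omega_i}$ is invariant only under its own $f_i$; the intersection $\bigcap_i H_{\omega_i}$ need not be invariant under any $f_j$, so $H_\omega=1$ does not by itself control $\bigcap_i H_{\omega_i}$ or its conjugates. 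As written, your proof is incomplete at exactly this step.

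The paper avoids this difficulty altogether by abandoning the abstract route through Theorem~A and instead working with an explicit two-endomorphism data for $L=C_2\wr\mathbb{Z}=\langle a\rangle\wr\langle x\rangle$. It starts from a single simple virtual endomorphism $f\colon H=L'\langle x\rangle\to L$ (sending $[a,x]\mapsto a$, $x\mapsto x$), computes its parabolic $H_\omega=\langle x\rangle$, and then takes as second endomorphism the conjugate $f_a=a^{-1}fa$, proving the general fact that the parabolic of a conjugate endomorphism is the conjugate of the parabolic: $H_{\omega_a}=H_\omega^{\,a}=\langle [a,x]x\rangle$. A direct calculation then gives $H_\omega\cap H_{\omega_a}=1$. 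With this in hand, the embedding lemma you used (your diagonal-action argument) applies with $g_1=e$, $g_2=e$, yielding $A\wr L\hookrightarrow A^{(H_\omega\backslash L)\times(H_{\omega_a}\backslash L)}\rtimes L^2$, and Theorem~B finishes the job. So the paper's proof is concrete and hands-on for $C_2\wr\mathbb{Z}$ specifically, whereas your proposal attempts a general reduction that would need an additional lemma you have not proved.
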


\begin{proof}
Let $G$ and $A$ be groups, $K \leq G$, and  $H_1, \dots, H_n$ be subgroups of $G$ such that $K \cap \cap_{i=1}^{n} H_{i} = 1$. Consider the group $$\mathcal{G}=A^{((H_1\backslash G)\times\dots\times (H_n\backslash G))}\rtimes G^n.$$
and the homomorphism
\begin{align*}
      \varphi: A\wr K &\rightarrow \mathcal{G}& \\
      \prod_{i=1}^s a_i^{n_{i1} h_{i1} + \cdots + n_{ir_{i}} h_{ir_{i}}}h & \mapsto \prod_{i=1}^s a_i^{n_{i1}(H_1 h_{i1}, \cdots, H_n h_{i1}) + \cdots + n_{ir_{i}}(H_1 h_{ir_{i}}, \cdots, H_n h_{ir_{i}})}(h, \dots, h)&
      \end{align*}
Since $K \cap \cap_{i=1}^{n} H_{i} = 1$, the homomorphism $\varphi$ is injective.

Now, let $f: H \rightarrow G$ be a virtual endomorphism, and $g$ be an element of $G$. Then
    \begin{align*}
      f_g=g^{-1}fg: H_G &\rightarrow G& \\
      h&\mapsto h^{g^{-1}fg}&
      \end{align*}
is a virtual endomorphism, where $H_G$ is the normal core of $H$ in $G$. Take $L \lhd G$ with $L$ $f_g$-invariant. If $x\in L$, then
   $$x^f=x^{gg^{-1}fgg^{-1}}=((x^{g})^{f_g})^{g^{-1}}\in L$$
and $L$ is also $f$-invariant.

Let $H_{\omega}$ be the parabolic subgroup of $f|_{H_{G}}$,
and $H_{\omega_{g}}$ be the parabolic subgroup of $f_{g}$. Then $$H_\omega^{gf_g}=H_\omega^{gg^{-1}fg}=H_\omega^{fg}=H_\omega^{g},$$ 
which implies $H_\omega^{g} \leq H_{\omega_g}$. Now, if $x \in H_{\omega_g} \leq H_{G}$, there exists $y \in H_{G}$ such that $x=y^{g}$, and
  $$x^{f_{g}^n}=x^{g^{-1}f^ng}=y^{f^ng},$$
so $y \in H_\omega$ and $x\in H_\omega^{g}$. Thus $H_{\omega}^g = H_{\omega_{g}}$. \\

We are ready to prove the corollary. Consider $G= C_2\wr \mathbb{Z} = \langle a \rangle \wr \langle x \rangle $ and the simple endomorphism 
\begin{align*}
      f:H=G'\langle x \rangle &\rightarrow G& \\
      [a,x]&\mapsto a&\\
      x&\mapsto x&
      \end{align*}
Since $H \lhd G$, then $H_G = H$ and 
\begin{align*} f_a=afa: H &\rightarrow G& \\
      [a,x]&\mapsto [a,x]^{afa}=a&\\
      x&\mapsto x^{afa}=([a,x]x)^{fa}=(ax)^{a}=a^xx& \end{align*}
is a simple endomorphism such that $H_{\omega_a} = H_{\omega}^a=\langle x \rangle ^a = \langle [a,x]x \rangle$. Note that $([a,x]x)^{f_a}=aa^xx=[a,x]x$, then $H_{\omega}\cap H_{\omega_a} = 1$. By the first paragraph of this proof, $A \wr (C_2 \wr \mathbb{Z})$ is a subgroup of 
$$\mathcal{G} = A^{(H_{\omega} \setminus (C_{2} \wr \mathbb{Z})) \times (H_{\omega_a} \setminus (C_{2} \wr \mathbb{Z}))} \rtimes (C_{2} \wr \mathbb{Z})^2.$$
By Corollary \ref{cor4.2}, $\mathcal{G}$ is state-closed and finite-state, so $A \wr (C_2\wr\mathbb{Z})$ is also finite-state.
\end{proof}

In case $G=C_2\wr\mathbb{Z}$, by Theorem \ref{T3} we have that $C_2^{(H_\omega\backslash G)\times (H_{\omega_a}\backslash G)}\rtimes G^2\simeq\langle \gamma, \beta_1,\beta_2,\alpha_1,\alpha_2\rangle\leq\mathcal{A}_{16}$ where

\begin{align*}
    \gamma=&(1\,5)(2\,6)(3\,7)(4\,8)(9\,13)(10\,14)(11\,15)(12\,16)\\
    \beta_1=&(1\,2)(3\,4)(5\,6)(7\,8)(9\,10)(11\,12)(13\,14)(15\,16)\\
    \beta_2=&(1\,3)(2\,4)(5\,7)(6\,8)(9\,11)(10\,12)(13\,15)(14\,16)\\
    \alpha_1=&(\alpha_1,\beta_1\alpha_1,\alpha_1,\beta_1\alpha_1,[\gamma,\beta_1\alpha_1]\beta_1\alpha_1,\alpha_1,[\gamma,\beta_1\alpha_1]\beta_1\alpha_1,\alpha_1\beta_1,\alpha_1,\alpha_1\beta_1,\alpha_1,\alpha_1\beta_1,\alpha_1^{\beta_1},\\
    &\alpha_1\beta_1,\alpha_1^{\beta_1})\\
    \alpha_2=&(\alpha_2,\alpha_2,\alpha_2\beta_2,\alpha_2\beta_2,\alpha_2[\gamma,\alpha_2],\alpha_2[\gamma,\alpha_2],e,e,\alpha_2\beta_2,\alpha_2\beta_2,\alpha_2^{\beta_2},\alpha_2^{\beta_2},\alpha_2\beta_2[\gamma,\alpha_2\beta_2],\\
    &\alpha_2\beta_2[\gamma,\alpha_2\beta_2],e,e)
\end{align*}

and $\mathbb{Z}^{(H_\omega\backslash G)\times (H_{\omega_a}\backslash G)}\rtimes G^2\simeq\langle y,a_1,a_2,x_1,x_2 \rangle\leq\mathcal{A}_8$ where
\begin{align*}
    y=&(y,e,e,e,y,e,e,e,x_1)\\
    a_1=&(1\,2)(3\,4)(5\,6)(7\,8)\\
    a_2=&(1\,3)(2\,4)(5\,7)(6\,8)\\
    x_1=&(x_1,a_1x_1,x_1,a_1x_1,x_1a_1,a_1x_1a_1,x_1a_1,a_1x_1a_1,e)\\
    x_2=&(x_2,x_2,a_2x_2,a_2x_2,x_2a_2,x_2a_2,a_2x_2a_2,a_2x_2a_2,e).
\end{align*}

\section{Finite-state wreath product of type $\mathbb{Z} \wr G$}


In this section, we prove Theorem C for the particular case $A = \mathbb{Z}$, as the general case follows in a similar manner. We assume $G$ is a self-similar finite-state non-torsion group with respect to the data $$(\textbf{m}, \textbf{H}, \textbf{F}) = ((m, \dots, m), (H, \dots, H), (f_1,\dots,f_s)),$$ where $H$ is the core of $\cap_{i=1}^{s}H_{i}$ in $G$ and $m = [G:H]$. Consider $\mathcal{G}$ the semidirect product
$$\mathcal{G}=\mathbb{Z}^{(H_\omega \backslash G\times\dots\times H_\omega \backslash G )}\rtimes G^s,$$
and its subgroup
$$\mathcal{H}=\mathbb{Z}^{(H_\omega \backslash G\times\dots\times H_\omega \backslash G )}\rtimes (H\times\dots\times H)$$ 
with index $[G:H]^s$. Set the homomorphisms $\rho: \mathcal{H} \rightarrow \mathcal{G}$, $\tau:\mathcal{G} \rightarrow \mathcal{G}$, and $\mu:\mathcal{G} \rightarrow \mathcal{G}$ that extend the maps
\begin{align*}
    \rho: a^{(H_\omega g_1,\dots, H_\omega g_s)}&\mapsto\begin{cases}
       a^{(H_\omega g_1^{f_1},\dots, H_\omega g_s^{f_s})}, \textrm{ if } (g_1,\dots,g_s)\in H\times\dots\times H\\
       e,  \textrm{ if } (g_1,\dots,g_s)\notin H\times\dots\times H.
    \end{cases}\\
    (h_1,\dots,h_s)&\mapsto (h_1^{f_1},\dots, h_s^{f_s}), \, (h_1,\dots,h_s) \in H\times\dots\times H;
\end{align*}

\begin{align*}
    \tau: a^{(H_\omega g_1,H_\omega g_2\dots, H_\omega g_s)}&\mapsto
       a^{(H_\omega g_2,H_\omega g_3\dots, H_\omega g_{s}, H_\omega g_{1})}, (g_1,\dots,g_s) \in G^s,\\
    (g_1,\dots,g_s)&\mapsto (g_2, g_3\dots, g_{s}, g_{1}), (g_1,\dots,g_s) \in G^s;
\end{align*}
and
\begin{align*}
    \mu: a^{(H_\omega,\dots,H_\omega)}&\mapsto (x,\dots,x) \textrm{ where } o(x)=\infty \textrm{ and } x\in G\\
    (g_1,\dots,g_s)&\mapsto e, (g_1,\dots,g_s) \in G^s.
\end{align*}

    Now, consider the representation $\varphi:\mathcal{G}\rightarrow \mathcal{A}_{n}$ induced by the data $(\overline{\textbf{m}}, \overline{\textbf{H}}, \overline{\textbf{F}}) = ((m^s, 1, 1), (\mathcal{H}, \mathcal{G}, \mathcal{G}), (\rho, \tau, \mu))$, where $n = m^s +2$ and $m = [G : H]$. So, $\varphi$ is a state-closed representation of $\mathcal{G}$ with  kernel $\overline{\textbf{F}}$-core($\overline{\textbf{H}}$).

Note that $\mathbb{Z}^{H_{w} \setminus G} \rtimes G$ is isomorphic to a subgroup of $\mathcal{G}$; in fact, it is enough consider the isomorphism
\begin{align*}
    \delta: \mathbb{Z}^{(H_{\omega} \setminus G)} \rtimes G&\rightarrow D = \left\langle a^{(H_{\omega},\dots, H_{\omega})}, (g,\dots, g) \mid g \in G \right\rangle \leq \mathcal{G}\\
    a^{H_{\omega}g_{1} + \dots + H_{\omega}g_{n}}g&\mapsto a^{(H_{\omega}g_{1}, \dots, H_{\omega}g_{1}) + \dots + (H_{\omega}g_{n}, \dots, H_{\omega}g_{n})}(g,\dots,g).
\end{align*}

\begin{thm}\label{T2.1}
    The group $\mathbb{Z}^{(H_\omega\backslash G)}\rtimes G$ is isomorphic to a subgroup of $\mathcal{G}^{\varphi}$. Moreover, $\mathcal{G}^{\varphi}$ is finite-state.
\end{thm}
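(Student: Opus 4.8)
The plan is to prove two things: first, that the representation $\varphi$ of $\mathcal{G}$ is faithful, so that the image of the subgroup $D\leq\mathcal{G}$ under $\varphi$ is isomorphic to $D$, hence by $\delta$ to $\mathbb{Z}^{(H_\omega\backslash G)}\rtimes G$; and second, that $\mathcal{G}^\varphi$ is finite-state. For the first point it suffices to show that $\ker\varphi=\overline{\mathbf{F}}\text{-core}(\overline{\mathbf{H}})$ is trivial, i.e. that the only subgroup $N\leq\mathcal{H}$ with $N\lhd\mathcal{G}$, $N^\rho\leq N$, $N^\tau\leq N$ and $N^\mu\leq N$ is $N=1$. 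The overall strategy mirrors the proof of Theorem \ref{T3}(ii), the difference being that the $s$ maps $\rho_1,\dots,\rho_s$ used there are here replaced by the single $\rho$ together with the shift $\tau$ and the map $\mu$.

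First I would push $N$ off the top group. Since $H\lhd G$, both the base group $\mathbb{Z}^{(H_\omega\backslash G)^s}$ and $\mathcal{H}=\mathbb{Z}^{(H_\omega\backslash G)^s}\rtimes H^s$ are normal in $\mathcal{G}$, so the projection $\pi\colon\mathcal{G}\to G^s$ carries $N$ onto a normal subgroup $\overline{N}\leq H^s$ of $G^s$; moreover $\rho$ and $\tau$ commute with $\pi$, so $\overline{N}$ is invariant under $\rho|_{H^s}$ and $\tau|_{G^s}$. These are precisely the virtual endomorphisms of the $G^s$-data $(\mathbf{m_2},\mathbf{H_2},\mathbf{F_2})=((m^s,1),(H^s,G^s),(\rho,\tau))$, which by the lemma stating that this data yields $G^s$ as a finite-state self-similar group has trivial $\mathbf{F}$-core; hence $\overline{N}=1$, so $N\leq\mathbb{Z}^{(H_\omega\backslash G)^s}$ and in particular $N\cap G^s=1$. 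Now write a general element of $\mathbb{Z}^{(H_\omega\backslash G)^s}$ as $a^w$ with $w=\sum_{i=1}^{r}n_ic_i$, the $c_i\in(H_\omega\backslash G)^s$ distinct. From the defining rule of $\mu$ and the fact that $\mathbb{Z}^{(H_\omega\backslash G)^s}$ is generated by the $G^s$-conjugates of $a^{(H_\omega,\dots,H_\omega)}$ one gets $(a^c)^\mu=(x,\dots,x)$ for every coset-tuple $c$, hence $(a^w)^\mu=(x^{\epsilon(w)},\dots,x^{\epsilon(w)})$ where $\epsilon(w)=\sum_i n_i$. Since $N^\mu\leq N\cap G^s=1$ and $x$ has infinite order, every $a^w\in N$ has $\epsilon(w)=0$, so any $1\neq a^w\in N$ has $r\geq 2$. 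From here I would run the minimal-length argument of Theorem \ref{T3}(ii): take $1\neq a^w\in N$ with $r$ minimal, use $N\lhd\mathcal{G}$ to translate one $c_i$ to $(H_\omega,\dots,H_\omega)$, iterate $\rho$, and invoke the injectivity of $\lambda$ (Proposition \ref{3.3}) together with minimality to force all $c_i$ into the trivial coset-tuple, i.e. $w=0$ — a contradiction. Thus $\ker\varphi=1$, and $\mathbb{Z}^{(H_\omega\backslash G)}\rtimes G\cong D\cong D^\varphi\leq\mathcal{G}^\varphi$.

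For the finite-state assertion, finite-state automorphisms form a group and $\mathcal{G}$ is generated by $a=a^{(H_\omega,\dots,H_\omega)}$ together with $G^s$, so it is enough to treat these generators. Choose $T_1$ to be a transversal of $\mathcal{H}$ in $\mathcal{G}$ lying inside $G^s$ (possible since $\mathcal{G}/\mathcal{H}\cong G^s/H^s$), with the trivial lift at the identity coset, and $T_2=T_3=\{e\}$. For $g\in G^s$ one has $g^\mu=e$, $g^\tau\in G^s$, and $\theta_1(g,t)\in H^s$ with $\theta_1(g,t)^\rho\in G^s$, so every state of $g^\varphi$ lies in $\varphi(G^s)$, which is finite-state because $G$, hence $G^s$, is; therefore $Q(g^\varphi)$ is finite. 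For $a$, since $tat^{-1}\in\mathbb{Z}^{(H_\omega\backslash G)^s}\leq\mathcal{H}$ for all $t\in G^s$, the element $a^\varphi$ has trivial activity; its states are $\theta_1(a,t)^\rho$ (equal to $a$ at the identity coset and to $e$ elsewhere), $a^\tau=a$, and $a^\mu=(x,\dots,x)\in G^s$, so $Q(a^\varphi)=\{a^\varphi,e\}\cup Q((x,\dots,x)^\varphi)$ is finite. Hence $\mathcal{G}^\varphi$ is finite-state.

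The step I expect to be the main obstacle is the faithfulness of $\varphi$: one must be sure that with a single $\rho$ plus the shift $\tau$ (which does not involve the $f_i$ at all) the $\overline{\mathbf{F}}$-core is still trivial. Concretely, this requires checking that killing the projection of $N$ to $G^s$ genuinely only needs the $(\mathbf{m_2},\mathbf{H_2},\mathbf{F_2})$-data of the preceding lemma, and then that the minimal-length computation of Theorem \ref{T3}(ii) transfers verbatim — in particular that a $\rho$-iterate of a minimal element cannot collapse to the identity, which is exactly because $\rho$ fixes the term supported at $(H_\omega,\dots,H_\omega)$.
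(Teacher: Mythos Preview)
Your finite-state argument is essentially the paper's. The problem lies in the first half.

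You set out to prove that $\varphi$ is faithful. The paper does \emph{not} prove this; it proves only the weaker statement $D\cap K=1$ (with $K=\overline{\mathbf F}\text{-core}(\overline{\mathbf H})$), which is exactly what is needed for $D\cong D^\varphi$. Your argument for the stronger claim has a genuine gap.

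The gap is in the minimal-length step. You write ``iterate $\rho$ and invoke the injectivity of $\lambda$ (Proposition~\ref{3.3})''. But Proposition~\ref{3.3} gives injectivity of $\lambda_{n,\delta}$ only on the \emph{diagonal} $\Delta((H_\omega\backslash W_{\delta,1})^s)$, i.e.\ on tuples of the form $(H_\omega h,\dots,H_\omega h)$. A general element of your $N$ need not have diagonal coset-tuples, so the proposition does not apply. The analogue you actually need is injectivity of the coordinatewise map $(H_\omega\backslash H)^s\to(H_\omega\backslash G)^s$, $(H_\omega h_i)_i\mapsto(H_\omega h_i^{f_i})_i$, and this can fail: it requires $H_\omega^{f_i^{-1}}\cap H=H_\omega$ for every $i$, whereas membership in $H_\omega$ demands that \emph{every} composition $f_{j_1}f_{j_2}\cdots$ be defined on $h$, not merely those beginning with $f_i$. (This is exactly the difference from Theorem~\ref{T3}(ii), where the parabolics are the coordinatewise $H_{\omega_i}$, for which $H_{\omega_i}^{f_i^{-1}}\cap H_i=H_{\omega_i}$ does hold.) Once two distinct $c_j$ can collapse under $\rho$---in particular some $c_j$ can land on the trivial tuple and cancel against the $n_1$ term---your claim that ``a $\rho$-iterate of a minimal element cannot collapse to the identity'' is no longer justified.

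The paper sidesteps this by restricting to $D$ from the outset: any $y\in D\cap K$ has all its coset-tuples on the diagonal, and one then iterates words $\tau^{\epsilon_1}\rho\,\tau^{\epsilon_2}\rho\cdots\tau^{\epsilon_n}\rho$ rather than $\rho$ alone. On a diagonal tuple this is precisely $\lambda_{n,(\epsilon_1,\dots,\epsilon_n)}$, injective by Proposition~\ref{3.3}; varying the $\epsilon_k$ then forces the remaining support into $H_\omega$ and yields the contradiction. The diagonal hypothesis is what buys the injectivity, and restricting to $D$ is what supplies it.
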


\begin{proof} Let $\varphi:\mathcal{G}\rightarrow \mathcal{A}_n$ be the self-similar representation of $\mathcal{G}$ induced by the data $(\overline{\textbf{m}}, \overline{\textbf{H}}, \overline{\textbf{F}}) = ((n, 1, 1), (\mathcal{H}, \mathcal{G}, \mathcal{G}), (\rho, \tau, \mu))$ with $\overline{\textbf{F}}$-core($\overline{\textbf{H}}) = K$. Firstly we prove that $\mathbb{Z}^{(H_\omega\backslash G)}\rtimes G$ is a subgroup of $\mathcal{G}^{\varphi}$. For this it is enough show that $D \cap K = 1$.

If $y\in D\cap K$ then
    $$y=a^{n_1(H_\omega h_1,\dots,H_\omega h_1 )+\dots + n_r(H_\omega h_r,\dots,H_\omega h_r )}(h,\dots,h).$$
Since $G$ is a self-similar group and $K$ is normal in $G$, we can assume that $h_1=e$ and by applications of $\rho$, $\tau$ we can take $(h,\dots,h)=(e,\dots,e)$. Thus
$$y=a^{n_1(H_\omega,\dots,H_\omega)+\dots + n_r(H_\omega h_r,\dots,H_\omega h_r )}.$$

Suppose that $r > 1$ is minimal. By Proposition \ref{3.3}
$$y^{\tau^{\epsilon_{1}}\rho \tau^{\epsilon_{2}}\rho... \tau^{\epsilon_{n}}\rho}$$
is non trivial for any positive integer $n$ and any $0 \leq \epsilon_{1}, ..., \epsilon_{n} \leq s - 1$. Reordering the indexes, if necessary, let $n$ be the positive integer such that
$$y^{\tau^{\epsilon_{1}}\rho \tau^{\epsilon_{2}}\rho... \tau^{\epsilon_{n}}\rho} = a^{n_1(H_\omega,\dots,H_\omega)+ n_2(H_\omega h_{2, 1},\dots,H_\omega h_{2, s}) + \dots + n_l(H_\omega h_{l, 1},\dots,H_\omega h_{l,s})}$$
and $l > 1$ is minimal with $(h_{2, 1}, ..., h_{2, s}), ..., (h_{l, 1}, ..., h_{l, s}) \notin H_{\omega}^s$. Then
$$|(y^{\tau^{\epsilon_{1}}\rho \tau^{\epsilon_{2}}\rho... \tau^{\epsilon_{n}}\rho})^{\tau^{\epsilon_{n+1}}\rho \tau^{\epsilon_{n+2}}\rho... \tau^{\epsilon_{n+k}}\rho}| = l,$$
for any positive integer $k$ and any $0 \leq \epsilon_{n+1},\dots, \epsilon_{n+k} \leq s - 1$. Thus,
$$h_{2, 1}, \dots, h_{2, s}, \dots, h_{l, 1}, \dots, h_{l, s} \in H_{\omega} = \langle K \leq \cap_{i=1}^{s} H_{i} \mid K^{f_{i}} \leq K, \text{ for all }i \rangle$$
and this final contradiction prove the first assertion.
\,

Now we prove that the group $\mathcal{G}^\varphi$ is finite-state whenever $G$ is.

\,

    Let 
    $$\mathcal{H}=\mathbb{Z}^{(H_\omega\backslash G\times\dots\times H_\omega\backslash G)}\rtimes {H}.$$
As $G$ is finite-state, $G^s$ is also finite-state. Now, let $G^s$ be finite-state for the choice of transversals $T_1,\dots, T_s$ of $H_1,\dots, H_s$ in $G$, respectively, with $t_{0i}$ representants of $H_i$, $i=1,\dots, s$, and define
$$\dot{Z}=\{a^{(H_\omega,\dots,H_\omega)}\mid a\in\mathbb{Z}\}\simeq_{\phi} \mathbb{Z}.$$
Write $\dot{a}$ the element of $\dot{Z}$ such that $(\dot{a})^{\phi} = a$. Let $\prod_{i=1}^s T_{i}$ be a transversal of $\mathcal{H}$ in $\mathcal{G}$ and let $1$ be a transversal of $\mathcal{G}$ in $\mathcal{G}$. Since $\mathcal{G}$ is generated by $\dot{Z}\cup G^s$, it suffices to show that these elements are finite-state.

Consider the representation $\varphi$ of $G^s$ given by $g^\varphi=(t\mapsto g_{t}) \cdot \pi$, where $t \in \prod_{i=1}^s T_{i}$ and $\pi\in Sym(\prod_{i=1}^s T_{i})$. If $g\in G^s$, then
$$g^{\dot{\varphi}} = ((t \mapsto g_{t}), 1 \mapsto (g^{\tau})^{\varphi}, 1 \mapsto 1) \cdot \pi (m^s + 1)(m^s + 2).$$
If $\dot{a} \in \dot{Z}$, we have
$$\dot{a}^{\dot{\varphi}}= \left(t \mapsto \left\{\begin{array}{lr}
        \dot{a}^{\varphi}, & \text{ if } t =(e,\dots,e)\\
        1, & \text{ otherwise}
        \end{array}\right\}, 1 \mapsto \dot{a}^{\varphi}, 1 \mapsto {\bf x}^{\varphi} \right),$$
where $a^{\mu} = (x, \dots, x) = {\bf x}$. In the first case, the set of states of $g^{\dot{\varphi}}$ is $\{1, g_{t} \mid t \in \prod_{i=1}^s T_{i} \} \cup Q((g^{\tau})^{\varphi}) \cup \cdots \cup Q((g^{{\tau}^{s-1}})^{{\varphi}})$, which is contained in a finite set $U$, where $U^{\varphi} \subset U$ (Note that $g^{\tau^s} = g$). In the second case, the set of  states of $\dot{a}^{\dot{\varphi}}$ is $\{e,\dot{a}^{\varphi}\} \cup Q({\bf x}^{\varphi})$, which is finite. Then $\mathcal{G}$ is finite-state.






\end{proof}








Define the $k$-iterated wreath product of copies of $\mathbb{Z}$ inductively as follows: $W_{1}(\mathbb{Z}) = \mathbb{Z}$, and $W_{k}(\mathbb{Z}) = \mathbb{Z} \wr W_{k-1}(\mathbb{Z})$ for $k > 1$.

\begin{corollary} \label{cor5.2}
 Let $G$ be a finite-state self-similar group. Then the group $\mathbb{Z}\wr G$ has a finite-state representation. In particular, $W_{k}(\mathbb{Z})$ is finite-state for any $k \geq 1$.
\end{corollary}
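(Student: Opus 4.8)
The plan is to derive Corollary~\ref{cor5.2} from Theorem~\ref{T2.1} together with the reduction to trivial parabolic subgroup furnished by Theorem~A (Theorem~\ref{T2.5}), and then to iterate. Given a finite-state self-similar non-torsion group $G$, I would first invoke Theorem~\ref{T2.5} to replace the given $G$-data by a simple $G$-data $(\mathbf{l},\mathbf{K},\mathbf{E})$ of orbit-type $(r,\dots,r)$ (with $s$ entries, say) whose parabolic subgroup $H_\omega$ is \emph{trivial}, and for which $G$ is still finite-state. Now apply Theorem~\ref{T2.1}: the group $\mathbb{Z}^{(H_\omega\backslash G)}\rtimes G$ embeds in the finite-state self-similar group $\mathcal{G}^\varphi$, where $\mathcal{G}=\mathbb{Z}^{(H_\omega\backslash G)^s}\rtimes G^s$. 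But because $H_\omega=1$, the coset space $H_\omega\backslash G$ is just $G$ itself, so $\mathbb{Z}^{(H_\omega\backslash G)}\rtimes G=\mathbb{Z}^{(G)}\rtimes G=\mathbb{Z}\wr G$. Hence $\mathbb{Z}\wr G$ is a subgroup of the finite-state self-similar group $\mathcal{G}^\varphi$, and therefore (by the remark in the introduction that every finitely generated subgroup of $\mathcal{F}_n$ lies in an automata group, or simply by passing to the self-similar closure) $\mathbb{Z}\wr G$ has a finite-state representation.

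For the second assertion I would argue by induction on $k$. The base case $W_1(\mathbb{Z})=\mathbb{Z}$ is finite-state (it acts on $\mathcal{T}_2$ as the adding machine). For the inductive step, suppose $W_{k-1}(\mathbb{Z})$ is finite-state. A subtlety is that Theorem~\ref{T2.1} requires a finite-state self-similar representation as input, so one should note that $W_{k-1}(\mathbb{Z})$, being finite-state, is contained in an automata group $Q_{k-1}$ which is self-similar and finite-state; moreover $W_{k-1}(\mathbb{Z})$ is non-torsion (it contains $\mathbb{Z}$). Applying the first part of the corollary to $G=Q_{k-1}$ (or more directly to a self-similar finite-state representation of $W_{k-1}(\mathbb{Z})$ itself), we obtain that $\mathbb{Z}\wr W_{k-1}(\mathbb{Z})=W_k(\mathbb{Z})$ embeds in a finite-state self-similar group, hence is finite-state. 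This completes the induction.

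The main technical point — and the step I expect to require the most care — is the passage from ``self-similar finite-state with some parabolic subgroup'' to ``self-similar finite-state with trivial parabolic subgroup'' via Theorem~\ref{T2.5}, and the accompanying bookkeeping: one must verify that the hypotheses of Theorem~\ref{T2.1} (the data has shape $((m,\dots,m),(H,\dots,H),(f_1,\dots,f_s))$ with $H$ the core of $\cap H_i$, and $G$ non-torsion) are met by the new data, so that $H_\omega\backslash G=G$ and the construction indeed yields $\mathbb{Z}\wr G$ rather than a proper quotient of the lamplighter. A second, more cosmetic point in the induction is to make sure ``finite-state'' is used in the sense of being a subgroup of some automata group, so that the inductive hypothesis can be fed back into Theorem~\ref{T2.1}; this is harmless because, as noted in the introduction, the self-similar closure of a finite-state group is again finite-state and self-similar, and wreath products and the constructions above preserve being non-torsion.
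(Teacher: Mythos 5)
Your proposal is correct and follows essentially the same route as the paper: apply Theorem~A to pass to a simple $G$-data with trivial parabolic subgroup while preserving the finite-state property, so that $H_\omega\backslash G=G$ and the diagonal subgroup $D\simeq\mathbb{Z}^{(H_\omega\backslash G)}\rtimes G$ of Theorem~\ref{T2.1} becomes exactly $\mathbb{Z}\wr G$, which is therefore finite-state as a subgroup of $\mathcal{G}^\varphi$. Your explicit induction for $W_k(\mathbb{Z})$, including the observation that one must pass to a finite-state self-similar (automata) overgroup of $W_{k-1}(\mathbb{Z})$ before reapplying the construction and that everything in sight is non-torsion, is exactly the content the paper leaves implicit in its ``in particular'' clause.
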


\begin{proof}
   By Theorem A, we can take a self-similar representation of $G$ whose parabolic subgroup $H_\omega$ is trivial. Then $\mathcal{G}=\mathbb{Z}^{(H_\omega \backslash G\times\dots\times H_\omega \backslash G )}\rtimes G^s\simeq \mathbb{Z}\wr G^s$, and by Theorem \ref{T2.1}, we have $D\simeq \mathbb{Z}\wr G\simeq D^\varphi$, which is also finite-state.
\end{proof}

\section{Finite-state representation of degree two of the group $\mathbb{Z}\wr (\mathbb{Z} \wr \mathbb{Z})$}

Since the group $\mathbb{Z} \wr \mathbb{Z}$ is finite-state and self-similar, then by Corollary \ref{cor5.2}, the group $\mathbb{Z} \wr (\mathbb{Z} \wr \mathbb{Z})$ is finite-state.

\begin{theorem}
The group $\mathbb{Z} \wr (\mathbb{Z} \wr \mathbb{Z})$ is finite-state of degree $2$.     
\end{theorem}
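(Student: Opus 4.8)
The plan is to \emph{deflate} the $10$-regular tree down to the binary tree, starting from the finite-state self-similar representation $\psi$ of $W_{3}(\mathbb{Z})=\mathbb{Z}\wr(\mathbb{Z}\wr\mathbb{Z})$ of degree $10$ produced by Corollary~\ref{cor5.2} (built, via Theorem A, Theorem~\ref{T3} and Theorem~\ref{T2.1}, from the degree-$6$ representation of $G=\mathbb{Z}\wr\mathbb{Z}$ of Example~\ref{ex3.2}, so $m=[G:H]=2$, $s=3$ and $n=m^{s}+2=10$). The guiding principle is dual to the $k$-inflation $\mathcal{A}(Y)\to\mathcal{A}(Y^{k})$ of Section 2: a finite-state $\alpha\in\mathcal{F}_{10}$ can be transported to $\mathcal{F}_{2}$ provided the group $P\leqslant S_{10}$ generated by \emph{all} activities $\sigma(\alpha_{u})$ ($u$ a vertex) occurring in the representation admits a faithful action on $2^{k}$ points, for some $k$ with $10\leqslant 2^{k}$, that is realised by automorphisms of the finite binary tree $\mathcal{T}_{2}^{(k)}$ and fixes the $2^{k}-10$ auxiliary leaves.

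First I would pin down this activity group for $\psi$. Reading off the formulas in the proofs of Theorem~\ref{T3} and Theorem~\ref{T2.1}: the generator coming from $\dot{Z}$ has trivial activity; the two extra tree-letters attached by $\tau$ and $\mu$ are cosets of the whole group and are therefore fixed by every element; and an element of $G^{s}=(\mathbb{Z}\wr\mathbb{Z})^{3}$ acts on the remaining $m^{s}=8$ letters $T_{1}\times T_{2}\times T_{3}\cong\{0,1\}^{3}$ by flipping, in each coordinate $j$, exactly when its $j$-th entry leaves $H$. By the self-similarity of $\psi$ and induction on the level, the same holds for every state of every generator (the states are again elements of $G^{s}$, or elements of the base $\mathbb{Z}$-part which have trivial activity). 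Hence $P\leqslant (C_{2})^{3}$, the group of coordinate flips of $\{0,1\}^{3}$, acting on $\{1,\dots,8\}$ and fixing $\{9,10\}$. I then fix an embedding $\iota\colon\{1,\dots,10\}\hookrightarrow\{0,1\}^{4}$ carrying $\{1,\dots,8\}$ onto the $0$-branch $0\times\{0,1\}^{3}$ and $9,10$ to two leaves of the $1$-branch. Under this choice each coordinate flip of $\{0,1\}^{3}$ is realised by a genuine automorphism of $\mathcal{T}_{2}^{(4)}$ supported inside the $0$-branch (a uniform swap at a prescribed level of vertices below $0$), the $1$-branch — in particular $\iota(9),\iota(10)$ — being fixed; this produces an injective homomorphism $P\hookrightarrow\operatorname{Aut}(\mathcal{T}_{2}^{(4)})$, $p\mapsto\widehat{p}$, compatible with $\iota$.

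With $\iota$ and $p\mapsto\widehat{p}$ in hand I define the deflation $\Phi\colon W_{3}(\mathbb{Z})^{\psi}\to\mathcal{A}_{2}$ by prescribing the portrait of $\tilde\alpha=\Phi(\alpha)$ block by block: over the four binary levels hanging below the depth-$4j$ vertex $\iota(v)$, where $v$ is a level-$j$ vertex of $\mathcal{T}_{10}$, the automorphism $\tilde\alpha$ is to act as $\widehat{\sigma(\alpha_{v})}$, and the portrait is then continued below $\iota(v\cdot i)$ as the portrait of $\Phi(\alpha_{v\cdot i})$. Well-definedness (the blocks match up because $\widehat{\sigma(\alpha_{v})}$ sends $\iota(v\cdot i)$ to $\iota(v\cdot i^{\sigma(\alpha_{v})})$ and fixes the unused leaves) and the homomorphism property reduce to $p\mapsto\widehat{p}$ being a compatible homomorphism together with the standard recursion expressing the states of $\alpha\beta$ in terms of states of $\alpha$ and $\beta$; injectivity is immediate, since $\iota$ is injective and the $\mathcal{T}_{10}$-structure together with the activities can be read back off $\tilde\alpha$. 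Thus $\Phi$ embeds $W_{3}(\mathbb{Z})$ into $\mathcal{A}_{2}$.

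Finally I would check that $\Phi(W_{3}(\mathbb{Z})^{\psi})$ is finite-state. The states of $\tilde\alpha$ at depths divisible by $4$ are the automorphisms $\Phi(\beta)$ with $\beta\in Q(\alpha)$, together with the identity on unused regions — a finite set, since $Q(\alpha)$ is finite. The states at intermediate depths $4j+r$ with $1\leqslant r\leqslant 3$ are obtained by gluing a truncation (to depth $\leqslant 3$) of some $\widehat{p}$, $p\in P$ — a portrait taken from a \emph{finite} list, as $P$ is finite — onto finitely many copies of automorphisms $\Phi(\beta)$, $\beta\in Q(\alpha)$; so again only finitely many new automorphisms arise, and $Q(\tilde\alpha)$ is finite. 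Hence $\Phi$ carries $W_{3}(\mathbb{Z})$, which is finitely generated, into $\mathcal{F}_{2}$, proving the theorem (equivalently, $W_{3}(\mathbb{Z})$ is a subgroup of a two-letter automata group). The main obstacle is the verification in the second paragraph: to show that \emph{every} activity occurring in $\psi$, at every level, is a coordinate flip respecting the $\{0,1\}^{3}$ product structure — so that the activity group is the single ``binary'' group $(C_{2})^{3}$ — and to choose $\iota$ so that the auxiliary points remain fixed; once the activity group is seen to sit inside $\operatorname{Aut}(\mathcal{T}_{2}^{(k)})$, the construction of $\Phi$ and the finite-state count are routine.
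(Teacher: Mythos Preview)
Your approach is essentially the paper's: start from the degree-$10$ representation of $\mathcal{G}^{\varphi}$ coming from Example~\ref{ex3.2} and Theorem~\ref{T2.1}, observe that every activity lies in the elementary abelian group $(C_{2})^{3}$ acting on the $8$-point orbit (and fixing $\{9,10\}$), and then deflate to the binary tree by realising these activities as automorphisms of a finite binary subtree. The only cosmetic difference is that you embed all ten letters uniformly at depth~$4$, whereas the paper uses the variable-depth antichain $\{1y_{1}y_{2}y_{3}:y_{i}\in\{1,2\}\}\cup\{21,22\}$ and then simply records the resulting explicit generators in $\mathcal{A}_{2}$; your version supplies the conceptual justification that the paper's phrase ``by a degree deflation'' leaves implicit.
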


\begin{proof}
By Example \ref{ex3.2}, $\mathbb{Z} \wr \mathbb{Z}$ is self-similar of orbital-type $(2, 2, 2)$. Applying Theorem \ref{T2.1}, the group $\mathbb{Z} \wr (\mathbb{Z} \wr \mathbb{Z})$ is isomorphic to a subgroup of the group $(\mathbb{Z} \wr (\mathbb{Z} \wr \mathbb{Z})^3)^{\varphi}$ which has orbital-type $(8, 1, 1)$, where 
$$\varphi: \mathbb{Z} \wr (\mathbb{Z} \wr \mathbb{Z})^3 = \langle a \rangle \wr \left(\prod_{i = 1}^{3} (\langle y_{i} \rangle \wr \langle x_{i} \rangle)\right) \rightarrow \mathcal{A}_{10}$$ is a homomorphism induced by the homomorphisms $$\rho: \mathcal{H} = \langle a \rangle^{\langle y_{i}, x_{i} \mid i = 1, 2, 3 \rangle} \langle y_{1}, y_{2}, y_{3} \rangle^{\langle x_{1}, x_{2}, x_{3} \rangle} \langle x_{1}^2, x_{2}^2, x_{3}^2 \rangle \rightarrow \mathcal{G},$$
$$\tau:\mathcal{G} \rightarrow \mathcal{G}, \mu:\mathcal{G} \rightarrow \mathcal{G},$$
that extend the maps
\noindent
\begin{align*}
    \rho: a^{y_{1}^{x_{1}^{2r}}}&\mapsto a^{y_{i}^{x_{1}^{r}}} \\
    a^{y_{2}^{x_{2}^{2r}}}&\mapsto a \\
    a^{y_{1}^{x_{1}^{2r+ 1}}}&\mapsto a\\
    a^{y_{2}^{x_{2}^{2r + 1}}}&\mapsto a^{y_{2}^{x_{2}^{r}}}, \\
    a^{y_{3}}&\mapsto a^{x_{3}} \\
    a^{x_{1}^{2r}}&\mapsto a^{x_{1}^{r}} \\
    a^{x_{1}^{2r + 1}}&\mapsto e \\
    a^{x_{2}^{2r}}&\mapsto a \\
    a^{x_{2}^{2r+1}}&\mapsto e \\
    a^{x_{3}^{2r}}&\mapsto a \\
    a^{x_{3}^{2r+1}}&\mapsto e \\
    y_{1}^{x_{1}^{2r}}&\mapsto y_{1}^{x_{1}^{2r}}\\
    y_{1}^{x_{1}^{2r+1}}&\mapsto e\\
    y_{2}^{x_{2}^{2r+1}}&\mapsto y_{2}^{x_{2}^{2r}}\\
    y_{2}^{x_{2}^{2r}}&\mapsto e\\
    x_{i}^{2r}&\mapsto x_{i}^{r}, i = 1, 2\\
    x_{3}^{2r} &\mapsto e;
\end{align*}
\begin{multicols}{2}
\noindent \begin{align*}
    \tau: a^{y_{i}}&\mapsto
       a^{y_{(i)\sigma}}, \text{ where } \sigma = (1 \, 2 \, 3)\\
       a^{x_{i}}&\mapsto
       a^{x_{(i)\sigma}}\\
    y_{i}&\mapsto y_{(i)\sigma}\\
    x_{i}&\mapsto x_{(i)\sigma}\\
\end{align*}
\noindent \begin{align*}
    \mu: a&\mapsto y_{1}\\
    y_{i}, x_{j} &\mapsto e, i,j = 1, 2, 3.
\end{align*}
\end{multicols}
Consider the transversals $T_1 = \{e, x_2, x_3, x_2x_3, x_1x_2x_3, x_1x_3, x_1x_2, x_1\}$
 of $\mathcal{H}$ in $\mathcal{G}$ and $T_2=T_3=\{e\}$ of $\mathcal{G}$ in $\mathcal{G}$. Thus the group $\mathcal{G}^{\varphi}$ is the group generated by the elements
$$a=(a,e,e,e,e,e,e,e,a,y_1)$$ $$y_1=(y_1,x_1,y_1,y_1,e,e,e,e,y_3,e),$$ $$y_2=(x_2,x_2,x_2,x_2,x_2,x_2,x_2,x_2,y_1,y_1),$$ $$y_3=(e,e,y_3,y_3,y_3,y_3,e,e,y_2,y_1),$$
$$x_1=(e,e,e,e,x_1,x_1,x_1,x_1,x_3,e)(1\,8)(2\,7)(3\,6)(4\,5),$$
$$x_2=(e,x_2,e,x_2,x_2,e,x_2,e,x_3,e)(1\,2)(3\,4)(5\,6)(7\,8),$$
$$x_3=(e,e,e,e,e,e,e,e,x_1,e)(1\,3)(2\,4)(5\,7)(6\,8).$$
By a degree deflation we obtain that the group generated by the elements
$${\bf a}=(\textcolor{green}{(}\textcolor{blue}{(}\textcolor{red}{(}{\bf a}, e\textcolor{red}{)},\textcolor{red}{(}e,e\textcolor{red}{)}\textcolor{blue}{)},\textcolor{blue}{(}\textcolor{red}{(}e,e\textcolor{red}{)},\textcolor{red}{(}e,e\textcolor{red}{)}\textcolor{blue}{)}\textcolor{green}{)},\textcolor{green}{(}{\bf a},{\bf y_1}\textcolor{green}{)}),$$
$${\bf y_1}=(\textcolor{green}{(}\textcolor{blue}{(}\textcolor{red}{(}{\bf y_1},{\bf x_1}\textcolor{red}{)},\textcolor{red}{(}{\bf y_1},{\bf y_1}\textcolor{red}{)}\textcolor{blue}{)},\textcolor{blue}{(}\textcolor{red}{(}e,e\textcolor{red}{)},\textcolor{red}{(}e,e\textcolor{red}{)}\textcolor{blue}{)}\textcolor{green}{)},\textcolor{green}{(}{\bf y_3},e\textcolor{green}{)}),$$
$${\bf y_2}=(\textcolor{green}{(}\textcolor{blue}{(}\textcolor{red}{(}{\bf x_2},{\bf x_2}\textcolor{red}{)},\textcolor{red}{(}{\bf x_2},{\bf x_2}\textcolor{red}{)}\textcolor{blue}{)},\textcolor{blue}{(}\textcolor{red}{(}{\bf x_2},{\bf x_2}\textcolor{red}{)},\textcolor{red}{(}{\bf x_2},{\bf x_2}\textcolor{red}{)}\textcolor{blue}{)}\textcolor{green}{)},\textcolor{green}{(}{\bf y_1},{\bf y_1}\textcolor{green}{)}),$$
$${\bf y_3}=(\textcolor{green}{(}\textcolor{blue}{(}\textcolor{red}{(}e,e\textcolor{red}{)},\textcolor{red}{(}{\bf y_3},{\bf y_3}\textcolor{red}{)}\textcolor{blue}{)},\textcolor{blue}{(}\textcolor{red}{(}{\bf y_3},{\bf y_3}\textcolor{red}{)},\textcolor{red}{(}e,e\textcolor{red}{)}\textcolor{blue}{)}\textcolor{green}{)},\textcolor{green}{(}{\bf y_1},{\bf y_2}\textcolor{green}{)})$$
$${\bf x_1}=(\textcolor{green}{(}\textcolor{blue}{(}\textcolor{red}{(}e,e\textcolor{red}{)}(1 \, 2),\textcolor{red}{(}e,e\textcolor{red}{)}(1\,2)\textcolor{blue}{)}(1 \, 2),\textcolor{blue}{(}\textcolor{red}{(}{\bf x_1},{\bf x_1}\textcolor{red}{)},\textcolor{red}{(}{\bf x_1},{\bf x_1}\textcolor{red}{)}\textcolor{blue}{)}\textcolor{green}{)}(1 \, 2),\textcolor{green}{(}{\bf x_3},e\textcolor{green}{)})$$
$${\bf x_2}=(\textcolor{green}{(}\textcolor{blue}{(}\textcolor{red}{(}e,{\bf x_2}\textcolor{red}{)}(1 \, 2),\textcolor{red}{(}e,{\bf x_2}\textcolor{red}{)}(1 \, 2)\textcolor{blue}{)},\textcolor{blue}{(}\textcolor{red}{(}{\bf x_2},e\textcolor{red}{)}(1 \, 2),\textcolor{red}{(}{\bf x_2},e\textcolor{red}{)}(1 \, 2)\textcolor{blue}{)}\textcolor{green}{)},\textcolor{green}{(}{\bf x_3},e\textcolor{green}{)})$$
$${\bf x_3}=(\textcolor{green}{(}\textcolor{blue}{(}\textcolor{red}{(}e,e\textcolor{red}{)},\textcolor{red}{(}e,e\textcolor{red}{)}\textcolor{blue}{)}(1 \, 2),\textcolor{blue}{(}\textcolor{red}{(}e,e\textcolor{red}{)},\textcolor{red}{(}e,e\textcolor{red}{)}\textcolor{blue}{)}(1 \, 2)\textcolor{green}{)},\textcolor{green}{(}{\bf x_1},e\textcolor{green}{)})$$
is a finite-state subgroup of $\mathcal{A}_2$ and is isomorphic to $(\mathbb{Z} \wr (\mathbb{Z} \wr \mathbb{Z})^3)^{\varphi}$. Since 
$$\mathbb{Z} \wr (\mathbb{Z} \wr \mathbb{Z}) \simeq \langle a, y_1y_2y_3, x_1x_2x_3 \rangle \simeq \langle {\bf a}, {\bf y_1y_2y_3}, {\bf x_1x_2x_3} \rangle,$$
the result follows.
\end{proof}

\end{document}